\theoremstyle{plain}
	\newtheorem{Theo}{Theorem}[section] 
	\newtheorem{Prop}{Proposition}[section]       
	\newtheorem{Lem}{Lemma}[section]            
        \newtheorem{TheoPrinc}{Theorem}     
	\newtheorem{Defi}{Definition}[section]
	\newtheorem{Nota}{Notation}[section]
	\newtheorem{Rema}{Remark}[section]
\newcommand{\bs}{\symbol{92}}
\def\ogg~{{\rm \og}}   
\def\NN{{\mathbb N}}    
\def\ZZ{{\mathbb Z}}     
\def\RR{{\mathbb R}}    
  \def\cG{{\mathcal G}}     \def\cH{{\mathcal H}}   \def\cC{{\mathcal C}}     \def\cD{{\mathcal D}}   \def\cP{{\mathcal P}}  \def\cE{{\mathcal E}}       \def\cL{{\mathcal L}} \def\cR{{\mathcal R}}    
\newcommand{\dx}{\partial_x}                    
\newcommand{\dy}{\partial_y}
\newcommand{\dt}{\partial_t}
\begin{document}
\title{Null controllability of the parabolic spherical Grushin equation\footnote{This work was supported by a grant from the "Fondation CFM pour la Recherche". It was also partially supported by the iCODE Institute, research project of the IDEXParis-Saclay.}}
\author{Cyprien Tamekue\footnote{Laboratoire des Signaux et Systèmes (L2S), cyprien.tamekue@l2s.centralesupelec.fr}\vspace{0.2cm}\\\small{Université Paris-Saclay, CNRS, CentraleSupélec, 91190, Gif-sur-Yvette, France}}
\maketitle
\begin{abstract}
	We investigate the null controllability property of the parabolic equation associated with the Grushin operator defined by the canonical almost-Riemannian structure on the 2-dimensional sphere $\mathbb S^2$. This is the natural generalization of the Grushin operator $\mathcal G = \partial_x^2 + x^2\partial_y^2$ on $\mathbb R^2$ to this curved setting, and presents a degeneracy at the equator of $\mathbb S^2$. 
	
	We prove that the null controllability is verified in large time when the control acts as a source term distributed on a subset $\overline{\omega} = \{ (x_1,x_2,x_3)\in \mathbb S^2\mid \alpha<|x_3|<\beta \}$ for some $0\le\alpha<\beta\le 1$. 
	More precisely, we show the existence of a positive time $T^{*}>0$ such that the system is null controllable from $\overline{\omega}$ in any time $T\ge T^{*}$, and that the minimal time of control from $\overline{\omega}$ satisfies $T_{\mbox{\scriptsize{min}}}\ge\log(1/\sqrt{1-\alpha^2})$. Here, the lower bound corresponds to the Agmon distance of $\overline{\omega}$ from the equator.
	
	These results are obtained by proving a suitable Carleman estimate by using unitary transformations and Hardy-Poincaré type inequalities to show the positive null-controllability result. The negative statement is proved by exploiting an appropriate family of spherical harmonics, which concentrates at the equator, to falsify the uniform observability inequality.
	
	\medskip
	\noindent
	\textbf{Key words:\;}
	Null controllability, Carleman estimates, singular/degenerate parabolic equations, Hardy-Poincaré type inequalities, Grushin operator, unitary transformation, spherical harmonics, almost-Riemannian geometry.
\end{abstract}


\tableofcontents

\bigskip

\section{Introduction}\label{s.introduction} 
During the last decade, there has been a lot of interest in studying the null controllability of degenerate parabolic equations, that is to say, parabolic equations whose principal symbol can vanish inside the domain.
In \cite{Beau}, it has been shown that null controllability for the Grushin equation, which is an example of a degenerate parabolic equation, requires a non-trivial positive time. This is in stark contrast with what happens for the usual heat equations (see, for instance, \cite{Coron,Fursikov,Imanuvilov,Lebeau}), which are null controllable in an arbitrarily short time. 

The study of properties of null controllability of parabolic, spherical Grushin equation is relevant since the latter is both degenerate and singular at different locations. So it is natural to expect that such a study would be a bit more subtle than that of dimension $2$. Before going further, let us start by recalling some well-known results going in our direction.

\subsection{The $2D$ parabolic Grushin equation}\label{ss.intro1}
Since its introduction in Baouendi \cite{Baouendi} (see also Grushin \cite{Grushin}), the so-called Grushin (or Grushin-Baouendi) operator, defined
in the bidimensional setting as
\begin{equation}\label{eq1}
\cG:=\dx^{2}+x^{2}\dy^{2},
\end{equation}
where $(x, y)\in \RR^{2}$, received considerable attention in the field of differential geometry, as well as in control theory as a prototypical example of a degenerate elliptic, hypoelliptic operator, see for instance \cite{Beau,Boscain,Prandi,Casarino,Grushin}.

More recently, in \cite{Beau} the authors investigated the properties of null controllability for the degenerate parabolic equation associated with \eqref{eq1}, showing that they exhibit a wider range of behaviours. In particular, null controllability may hold true or not depending on the geometry of the open set $\omega$ and the time horizon $T$. 
More precisely, the authors considered the following parabolic equation, which presents a degeneracy at $x=0$:
\begin{equation}\label{eq2}
\begin{cases}
\partial_tf-\cG f = u(t,x,y)\mathbbm{1}_{\omega_0}(x,y),&(t,x,y)\in(0,T)\times\Omega_0,\cr
f(t,x,y) = 0,&(t,x,y)\in(0,T)\times\partial\Omega_0,\cr
f(0,x,y) = f_0(x,y),&(x,y)\in\Omega_0,
\end{cases}
\end{equation}
where $T>0$, $\Omega_0 = (-1,1)\times(0,1)$,
$\;\omega_0\subset\Omega_0$ is an open subset, $f$ is the state, $u$ is the control function, $f_0$ is the initial datum. 
Then, we have the following, see \cite[Theorem 1]{Beau}.

\begin{Theo}[\cite{Beau}]
	Let $\omega = (a,b)\times(0,1)$, where $0<a<b\le1$. Then, we have
	\begin{equation*}
	T_{\mbox{\scriptsize{min}}}:=\inf\{T>0: \mbox{system (\ref{eq2}) is null controllable from\;$\omega_0$\; in time\;}T\} \ge \frac{a^2}{2}.
	\end{equation*}	
\end{Theo}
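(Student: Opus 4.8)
The plan is to argue by contradiction on the observability inequality that is dual to null controllability. By the Hilbert Uniqueness Method, the system \eqref{eq2} is null controllable from $\omega$ in time $T$ if and only if there is a constant $C_T>0$ such that every solution $g$ of the adjoint problem $-\partial_t g-\cG g=0$ on $(0,T)\times\Omega$ with homogeneous Dirichlet data satisfies
\begin{equation*}
\|g(0,\cdot,\cdot)\|_{L^2(\Omega)}^2\le C_T\int_0^T\!\!\int_\omega |g(t,x,y)|^2\,dx\,dy\,dt .
\end{equation*}
To disprove this for every $T<a^2/2$, I would construct a family of explicit solutions concentrating on the degeneracy line $\{x=0\}$, whose energy is almost invisible from $\omega=(a,b)\times(0,1)$.

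First I would separate variables in $y$. Let $\{\varphi_n\}_{n\ge1}$ be the $L^2(0,1)$ orthonormal eigenbasis of $-\partial_y^2$ compatible with the boundary conditions, with $-\partial_y^2\varphi_n=\nu_n^2\varphi_n$ and $\nu_n\to\infty$. For a single mode $g(t,x,y)=h(t,x)\varphi_n(y)$ the adjoint equation reduces to the one-dimensional evolution generated by the Schr\"odinger-type operator
\begin{equation*}
A_n:=-\partial_x^2+\nu_n^2\,x^2\quad\text{on }(-1,1)\text{ with Dirichlet conditions},
\end{equation*}
i.e.\ a quantum harmonic oscillator truncated to $(-1,1)$. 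The two spectral facts I need are that the lowest eigenvalue satisfies $\lambda_n=\nu_n\bigl(1+o(1)\bigr)$ and that the associated positive ground state $\phi_n$ concentrates at $x=0$ with the Gaussian profile $\phi_n(x)\sim e^{-\nu_n x^2/2}$, up to polynomial-in-$\nu_n$ factors; both follow by comparison with the Hermite ground state of the oscillator on $\RR$, the Dirichlet condition at $x=\pm1$ contributing only exponentially small corrections.

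Next I would test observability on the exact separated solutions $g_n(t,x,y)=e^{-\lambda_n(T-t)}\phi_n(x)\varphi_n(y)$, which indeed solve $-\partial_t g_n-\cG g_n=0$ with the right boundary data. The left-hand side equals $e^{-2\lambda_n T}\|\phi_n\|_{L^2(-1,1)}^2$, while the double integral factorizes into $\bigl(1-e^{-2\lambda_n T}\bigr)/(2\lambda_n)$ times $\int_a^b|\phi_n(x)|^2\,dx$. Evaluating the two Gaussian integrals by Laplace's method gives $\|\phi_n\|_{L^2(-1,1)}^2\sim \nu_n^{-1/2}$ and, since the integrand is decreasing on $(a,b)$ for $a>0$, $\int_a^b|\phi_n|^2\,dx\sim \nu_n^{-1}e^{-\nu_n a^2}$, dominated by the endpoint $x=a$. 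Substituting these asymptotics forces
\begin{equation*}
C_T\ \gtrsim\ \lambda_n\,e^{-2\lambda_n T}\,\frac{\|\phi_n\|_{L^2(-1,1)}^2}{\int_a^b|\phi_n|^2\,dx}\ =\ \mathrm{poly}(\nu_n)\,e^{\,\nu_n(a^2-2T)+o(\nu_n)} ,
\end{equation*}
which diverges as $n\to\infty$ exactly when $T<a^2/2$. This contradicts the finiteness of $C_T$, so observability and hence null controllability fail for every $T<a^2/2$, yielding $T_{\min}\ge a^2/2$.

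I expect the spectral analysis of $A_n$ to be the crux: the whole threshold rests on matching the observation rate $e^{-\nu_n a^2}$ against the dissipation rate $e^{-2\lambda_n T}$, so the concentration rate of $\phi_n$ and the asymptotics $\lambda_n\sim\nu_n$ must be controlled sharply, and every polynomial-in-$\nu_n$ loss must be shown to be harmless. Quantitative Agmon estimates, or an explicit comparison with Hermite functions together with control of the boundary perturbation at $x=\pm1$, are the natural tools; the remaining steps, namely the duality reduction and the Laplace evaluation of two Gaussian integrals, are routine.
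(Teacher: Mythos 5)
Your proposal is correct in outline and reconstructs, essentially faithfully, the argument of \cite{Beau}; note that the present paper does not prove this statement (it is quoted from \cite{Beau}), so the closest internal comparison is the paper's own negative result for the spherical Grushin equation, Proposition~\ref{Negative.pro} in Section~\ref{ss.proof_negative}, which follows the same template: dualize to observability, restrict to one Fourier mode in $y$, and test the inequality on eigenmodes that concentrate like a Gaussian on the degeneracy while dissipating at rate $e^{-\lambda_n t}$ with $\lambda_n\sim\nu_n$, so that the observed mass $e^{-\nu_n a^2}$ loses to the dissipation $e^{-2\nu_n T}$ exactly when $T<a^2/2$. The one substantive difference lies in where the concentration estimate comes from. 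In the spherical case the relevant eigenfunctions are fully explicit, $w_n(x)=c_n(\cos x)^{n+1/2}$ with exact eigenvalue $n$, so the analogue of your ``crux'' reduces to Wallis' formula and Stirling's estimate; in the flat case of \eqref{eq2} you must genuinely establish the ground-state asymptotics $\lambda_n=\nu_n(1+o(1))$ and two-sided Gaussian bounds, with only polynomial losses, for the Dirichlet-truncated oscillator $-\partial_x^2+\nu_n^2x^2$ on $(-1,1)$. You correctly flag this as the only non-routine step, and the tools you name (Agmon estimates, comparison with the Hermite ground state, control of the exponentially small boundary correction) are exactly those used in \cite{Beau}. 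Granting that lemma, the remaining computations --- the duality reduction, the factorization of the double integral over $\omega=(a,b)\times(0,1)$, the Laplace evaluations giving $\|\phi_n\|^2\sim\nu_n^{-1/2}$ and $\int_a^b|\phi_n|^2\sim\nu_n^{-1}e^{-\nu_n a^2}$, and the divergence of $C_T$ for $T<a^2/2$ --- are accurate and yield $T_{\min}\ge a^2/2$.
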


This result is to be interpreted in the following sense: there exists a positive time $T^{*}>0$ such that system (\ref{eq2}) is null controllable from $\omega_0$ in any time $T>T^{*}$ and that the minimal time $T_{\mbox{\scriptsize{min}}}$ required for the null controllability of system (\ref{eq2}) from $\omega_0$ satisfies $T_{\mbox{\scriptsize{min}}}\ge\frac{a^{2}}{2}$.

Following this line of investigation, in \cite[Theorem 1.4]{Beau2}, Beauchard, Dardé, and Ervedoza consider the more general operator $\cG_q=\dx^2+q(x)^2\dy^2$, where $q$ is a real function, satisfying for some $L_{\pm}>0$ the following:
\begin{equation}\label{eq3}
q(0) = 0,\hspace{1cm}q\in\cC^{3}([-L_{-},L_{+}]),\hspace{1cm}\inf\limits_{(-L_{-},L_{+})}q'>0.
\end{equation}
For the associated parabolic degenerate equation on $\Omega=(-L_-,L_+)\times (0,\pi)$, with boundary control at the vertical side $\Gamma_+=\{L_+\}\times(0,\pi)$, and with initial datum $f_0\in H^1_0(\Omega)$ the authors are able to obtain the sharp value of the minimal time:
\begin{equation}\label{eq4}
T_{\mbox{\scriptsize{min}}} = \frac{1}{q'(0)}\int_{0}^{L_{+}}q(s)ds.
\end{equation}
Note that the integral in (\ref{eq4}) can be seen as the Agmon's distance associated to potential $q$ between $\{L_{+}\}$ (related to the control support) and $\{0\}$ (related to the degeneracy location). See also \cite{Beau4} for preliminary results in this direction.

\subsection{Setting, main results and strategy of proofs}\label{ss.intro2}
Let us consider the $2-$dimensional sphere $\mathbb{S}^{2} = \{p = (x_{1},x_{2},x_{3})\in\RR^{3}:x_{1}^{2}+x_{2}^{2}+x_{3}^{2} = 1\}$. We let $X$, $Y$ and $Z$ be vector fields generating the counter clock-wise rotations around the $x_{1}$, $x_{2}$ and $x_{3}$ axes respectively, viz.
\begin{equation}\label{eq::vectorfields}
X = -x_{3}\partial_{x_2}+x_{2}\partial_{x_3},\hspace{1cm} Y = -x_{3}\partial_{x_1}+x_{1}\partial_{x_3},\hspace{1cm} Z = -x_{2}\partial_{x_1}+x_{1}\partial_{x_2}. 
\end{equation}
Here, we are using the identification of vector fields with derivations. These vector fields, usually known as the Killing vector fields on $\mathbb{S}^{2}$ span at each point $p$ of $\mathbb{S}^{2}$ the tangent space $T_{p}\mathbb{S}^{2}$.

Observe that $\{X,Y\}$ are linearly independent outside of the equator $\cE := \{x_3=0\}$. Nevertheless, since $\left[X,Y\right] = Z$, the system of vector fields $\left\{X,Y\right\}$ is bracket-generating and determines a sub-Riemannian structure on $\mathbb{S}^{2}$ which is a $2-$almost-Riemannian structure ($2-$ARS for short) on $\mathbb{S}^{2}$ (see for instance \cite{Agra,Boscain,Prandi,Casarino,Colin,Strichartz} for more details). The pair $\left\{X,Y\right\}$ is called the generating frame of the $2-$ARS. 

In this work we are interested in the hypoelliptic operator defined by
\begin{equation}\label{intrinsec-grushin}
\mathcal L := \operatorname{div}_\mu \circ \nabla_{\text{sR}}=-X^{+}X-Y^{+}Y.
\end{equation}
Here, $\nabla_{\text{sR}}$ is the sub-Riemannian gradient defined by $\nabla_{\text{sR}} \phi = (X\phi)X+(Y\phi)Y$ for any $\phi\in C^\infty(\mathbb S^2)$, while $\operatorname{div}_\mu$ denotes the divergence w.r.t.\ the standard Riemannian volume form $\mu$ on $\mathbb S^2$, as induced by the Euclidean Lebesgue measure. Moreover, $X^{+}$ and $Y^{+}$ denote respectively the formal adjoints of $X$ and $Y$ taken in the space $ L^2(\mathbb{S}^2,\mu)$, the Hilbert space of measurable and
square-integrable functions over $\mathbb{S}^2$ with respect to $\mu$.
As it will be evident from the coordinate expression that we will present in the following, this is a degenerate operator that generalizes to the sphere $\mathbb S^2$ the Grushin operator. Note that, $\cC^\infty(\mathbb{S}^2)$ is canonically defined as the space of the restrictions to $\mathbb{S}^2$ of
functions that are $\cC^\infty$ on an open neighbourhood of $\mathbb{S}^2$. Such functions have compact support since $\mathbb{S}^2$ is a compact manifold.

The operator $\cL$ is essentially self-adjoint on $\cC^{\infty}(\mathbb{S}^{2})$, and we will henceforth consider its self-adjoint realization. See Section~\ref{s.Well}.
Our main result is then the following.
\begin{TheoPrinc}
	\label{thm:main-intrinsic}
Let $f_0\in  L^2(\mathbb S^2,\mu)$ and $u\in  L^2(0,T;  L^2(\mathbb S^2,\mu))$. Let $\overline{\omega} = \{ (x_1,x_2,x_3)\in \mathbb S^2\mid \alpha <|x_3|< \beta \}$ with $0\le\alpha<\beta<1$. We consider the following equation 
	\begin{equation}
	\label{eq:heat-intrinsic}
	\begin{cases}
	\partial_t f - \mathcal L f = u \mathbbm{1}_{\overline{\omega}},&\qquad \text{in }(0,T)\times \mathbb S^2,\\
	f|_{t=0} = f_0,& \qquad\text{in }\mathbb S^2.
	\end{cases}
	\end{equation}	
		 Then, the minimal time of null controllability from $\overline{\omega}$ satisfies $T_{\min}\ge  \log (1 / \sqrt{1-\alpha^2})$. Moreover, there exists $T^{*}>0$ such that, for every $T\ge T^{*}$,  system \eqref{eq:heat-intrinsic} is null controllable from $\overline{\omega}$ in time~$T$.
\end{TheoPrinc}
It should be noted that we will prove in this paper the Theorem \ref{thm:main-intrinsic} only in the interesting case when $\alpha>0$, i.e., when the control region $\overline{\omega}$ does not touch the degeneracy $\cE = \{x_3=0\}$. However, let us emphasise that if $\alpha=0$ then the equation \eqref{eq:heat-intrinsic} is null controllable in any time $T>0$. This result can be proved using a classical cut-off argument (as done for instance in \cite{Beau,Beau4} for the 2-dimensional parabolic Grushin operator) by taking advantage of the fact that the equation is null controllable in both hemispheres $\mathbb{S}_+^2$ and $\mathbb{S}_-^2$  in any time $T>0$ by the result of Lebeau and Robbiano \cite{Lebeau} (see also Fursikov and Imanuvilov \cite{Fursikov}) since  $\cL$ is a uniformly elliptic operator on the two hemispheres $\mathbb{S}_+^2$ and $\mathbb{S}_-^2$. 

To prove Theorem \ref{thm:main-intrinsic} and, by the way, understand how the operator $\mathcal L$ is connected with the Grushin operator, we use spherical coordinates. To this end, it is slightly easier to consider the vector fields \eqref{eq::vectorfields} as the restriction of $\mathbb{S}^2$ of the vector fields in $\RR^3$ given by the same formulae. 

Let
\begin{equation}\label{eq::Omega_domain}
\Omega :=  (-\pi/2,\pi/2)\times[0,2\pi),\qquad\mbox{and}\qquad U := \RR_+^{*}\times\Omega,
\end{equation}
and consider the latitude $x$ and longitude $y$ coordinates, viz.
\begin{eqnarray}\label{eq::diffeo F}
\operatorname{F}:&U&\longrightarrow\RR^3\nonumber\\
&(r,x,y)&\longmapsto\operatorname{F}(r,x,y) = (r\cos x\cos y,r\cos x\sin y,r\sin x),
\end{eqnarray}
so that $\operatorname{F}^{-1}(\mathbb{S}^{2}\bs\{N,S\})=V:=\{1\}\times\Omega\cong\Omega$. We let $\Phi := \operatorname{F}|_{V}$, then up to a rotation of angle $y-\frac{\pi}{2}$, the pull-back by $\Phi$ of the vector fields of the generating frame read
\begin{equation}\label{eq5}
\Phi^{*}X:= (d\operatorname{F}^{-1}\cdot X)|_{\Phi(r, x, y) }= \partial_x,\qquad \Phi^{*}Y := (d\operatorname{F}^{-1}\cdot Y)|_{\Phi(r, x, y) } = \tan x\partial_{y},
\end{equation}
where $d\operatorname{F}^{-1}$ denotes the inverse of the Jacobian matrix of $\operatorname{F}$.

Let us denote by $\cD(p)$ the linear span of the two vector fields $\Phi^{*}X$ and $\Phi^{*}Y$ at a point $\Phi^{-1}(p), \;p\in\mathbb{S}^2\bs\{N,S\}$. One can easily check that, $\cD(p)$ is $2-$dimensional except on the equator $\Phi^{-1}(\cE)  = \{x = 0\}$ where it is $1-$dimensional. We may observe also that, due to the system of coordinates, the vector field $\Phi^{*}Y$ is singular at $\pm\pi/2$.
The standard rotation-invariant measure on $\Omega$ in these coordinates is given by
\begin{equation}\label{eq7}
d\sigma = \cos xdxdy.
\end{equation}
\begin{figure}
	\centering
	\def\r{2.5}
	\tdplotsetmaincoords{60}{125}
	\begin{tikzpicture}[tdplot_main_coords]
	\draw[tdplot_screen_coords,thin,black!30] (0,0,0) circle (\r);
	\foreach \a in {-75,-44.5,...,75}
	{\tdplotCsDrawLatCircle[thin,black!25]{\r}{\a}}
	\foreach \a in {25,25.5,...,55}
	{\tdplotCsDrawLatCircle[thin,green!50]{\r}{\a}}
	\foreach \a in {-55,-54.5,...,-25}
	{\tdplotCsDrawLatCircle[thin,green!50]{\r}{\a}}
	\tdplotCsDrawLatCircle[red,thick]{\r}{0}
	\begin{scope}[thin,black!90]
	\draw[->] (0,0,0) -- (1.8*\r,0,0) node[anchor=north east] {$x_{1}$};
	\draw[->] (0,0,0) -- (0,1.5*\r,0) node[anchor=north] {$x_{2}$};
	\draw[->] (0,0,0) -- (0,0,1.5*\r) node[anchor=south east] {$x_{3}$};
	\draw (0,0,0) -- (0,0,-\r);
	\end{scope}
	\draw (0,0,0) node[above left]{$O$};
	\draw (0,2.7,2.5) node[above]{$\overline{\omega}$};
	\draw (1.5,-1,0) node[below]{$\cE$};
	\draw (0,0,\r) node[blue] {$\circ$};
	\draw (0,0,\r) node[above right]{$N$};
	\draw (0,0,-\r) node[blue] {$\circ$};
	\draw[dashed] (0,0,-\r) node[below]{$S$};
	\end{tikzpicture} 
	\caption{The equator $\cE$ (in red), a control region $\overline{\omega}$ (in green), north and south pole (in blue).} \label{fig1} 
\end{figure}
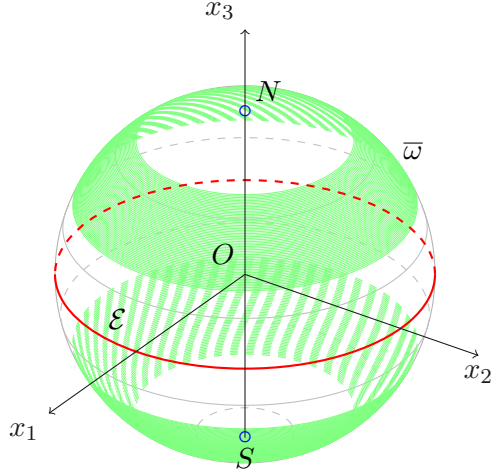
Observe that the diffeomorphism $\Phi :\Omega\to\mathbb{S}^2\bs\{N,S\}$ induces a unitary transformation
	\begin{eqnarray}\label{eq::unitary transformation}
	\operatorname{T_\Phi}:& L^2(\Omega,\sigma)&\longrightarrow  L^2(\mathbb{S}^2\bs\{N,S\},\mu)\\\nonumber
	&v&\longmapsto \operatorname{T_\Phi}v = v\circ\Phi^{-1},
	\end{eqnarray} 
	and that $\Phi^{*}X = \operatorname{T}_{\Phi}^{-1} X \operatorname{T}_{\Phi}$ and $\Phi^{*}Y =  \operatorname{T}_{\Phi}^{-1} Y \operatorname{T}_{\Phi}$. Here $\operatorname{T}_{\Phi}^{-1}$ is the inverse (or the adjoint) of $\operatorname{T_\Phi}$.
	From now on we let the \textit{spherical Grushin} operator be the coordinate representation under $\Phi$ of $\cL$. That is, the operator defined by
\begin{equation}\label{eq::intrinsic coordinate}
\Phi^{*}\mathcal{L}:= \operatorname{T}_{\Phi}^{-1}\cL\operatorname{T_\Phi}.
\end{equation}
In particular, $\Phi^{*}\cL$ is self-adjoint with core $\operatorname{T}_{\Phi}^{-1}(C^{\infty}(\mathbb(S^{2})))$. See Section~\ref{ss.Well1} for a characterization of its domain and corresponding boundary conditions.

In terms of the local generating family of vector fields \{$\Phi^{*}X$, $\Phi^{*}Y$\} we have
\begin{equation}\label{eq9}
\Phi^{*}\mathcal{L} := -(\Phi^{*}X)^{+}(\Phi^{*}X)-(\Phi^{*}Y)^{+}(\Phi^{*}Y) = \frac{1}{\cos x}\partial_{x}(\cos x\partial_{x})+\tan^2x\partial_{y}^{2},
\end{equation}
with $(\Phi^{*}X)^{+}$ and $(\Phi^{*}Y)^{+}$ being the formal adjoints of $\Phi^{*}X$ and $\Phi^{*}Y$ respectively, taken in the space $ L^2(\Omega,\sigma)$.

\begin{Rema}\label{Rema2}
	The singularity of (\ref{eq9}) at north and south poles is due to the latitude-longitude chart $\Phi$. We stress that whatever chart is chosen, this phenomenon of singularity will always occur since global coordinates do not exist\footnote{In fact, $\mathbb{S}^{2}$ is not a \textit{local} surface of $\RR^{3}$ in the sense of Berger and Gostiaux \cite[p. 348]{Berger_Gostiaux}, meaning that there is no open set $U\subset\RR^{2}$ and an immersion $\Phi\in\cC^ {\infty}(U;\RR^{3})$ such that $\Phi$ is a homeomorphism between $U$ and its image $\mathbb{S}^{2} = \Phi(U)$.} on $\mathbb{S}^{2}$.
\end{Rema}
Throughout the following, we let the real numbers $0<a<b\le\frac\pi 2$ be such that $\alpha=\sin a$ and $\beta=\sin b$, $\alpha$ and $\beta$ being as in Theorem~\ref{thm:main-intrinsic}. We set
\begin{equation}\label{eq::omega}
\omega := \omega_{a,b}\times(0,2\pi)\qquad\mbox{and}\qquad\omega_{a,b}:=(-b,-a)\cup(a,b).
\end{equation}
Hence, Theorem~\ref{thm:main-intrinsic} is equivalent to the following.
\begin{TheoPrinc}\label{Theo1}
	 Let $f_0\in  L^2(\Omega,\sigma)$ and $u\in  L^2(0,T;  L^2(\Omega,\sigma))$. Let $\omega$ be defined as in \eqref{eq::omega}. We consider the following equation
	\begin{equation}\label{eq10}
	\begin{cases}
	\displaystyle\partial_tf-(\Phi^{*}\cL)f = u\mathbbm{1}_\omega,&\mbox{ in }(0,T)\times\Omega,\cr
	\displaystyle f|_{t=0} = f_{0},&\mbox{ in }\Omega.
	\end{cases}
	\end{equation}
	Then, the minimal time of null controllability from $\omega$ satisfies $T_{\min}\ge \log(1/\cos a)$. Moreover, there exists $T^{*}>0$ such that, for every $T\ge T^{*}$, system \eqref{eq10} is null controllable from $\omega$ in~time~$T$.
\end{TheoPrinc}

 Recall that system \eqref{eq10} is \textit{null controllable} from $\omega\subset\Omega$ in time $T>0$ if, for every $f_0\in  L^2(\Omega,\sigma)$, there exists a control $u\in  L^2(0,T; L^2(\Omega,\sigma))$ supported in $(0,T)\times\omega$ such that the solution $f$ of \eqref{eq10} satisfies $f(T,\cdot,\cdot) = 0$.

The following remark, although formal, illustrates the connection between the spherical Grushin operator $\Phi^{*}\cL$ and the 2D Grushin operator $\cG$.
\begin{Rema}\label{Rema3}
We consider \eqref{eq7} and \eqref{eq9}. 
Taking the first order Taylor expansion of $\cos x$ and $\tan x$ at  $x \approx 0$ we observe that $d\sigma\approx dxdy$ and $\Phi^{*}\mathcal{L}\approx \dx^{2}+x^2\dy^{2}$, so that $\Phi^{*}\mathcal{L}$ behaves like the Grushin operator (\ref{eq1}) in a neighbourhood of the degeneracy. As a consequence, we may expect the same properties of null controllability for the parabolic equation \eqref{eq10} associated to $\Phi^{*}\mathcal{L}$ as for the $2D$ parabolic Grushin equation (\ref{eq2}). 
\end{Rema}

Let us briefly discuss our strategy of proof.
As it is now classical, Theorem \ref{Theo1} is a straightforward consequence of the Hilbert Uniqueness Method \cite{Lions} meaning that the null controllability property of system \eqref{eq10} is equivalent to the observability of the adjoint system associated to \eqref{eq10}.

Thus, Theorem \ref{Theo1} is equivalent to the following.
\begin{TheoPrinc}\label{Theo2}
	 Let $g_0\in  L^2(\Omega,\sigma)$ and $\omega$ be defined as in \eqref{eq::omega}. Consider the adjoint system of~\eqref{eq10},
	\begin{equation}\label{eq12}
	\begin{cases}
	\partial_tg-(\Phi^{*}\cL)g = 0,&\mbox{ in }(0,T)\times\Omega,\cr
	g|_{t=0} = g_{0},&\mbox{ in }\Omega.
	\end{cases}
	\end{equation}
Then, the minimal time required for observability
in $\omega$ satisfies $T_{\min}\ge\log(1/\cos a)$. Moreover, there exists $T^{*}>0$ such that, for every $T\ge T^{*}$, system \eqref{eq12} is observable in $\omega$ in time $T$.
\end{TheoPrinc}

Recall that system \eqref{eq12} is \textit{observable} in $\omega\subset\Omega$ in time $T>0$ if there exists $C(T,\omega)>0$ such that, for every $g_{0}\in  L^2(\Omega,\sigma)$, the solution $g$ of system \eqref{eq12}
satisfies
\begin{equation}\label{eq13}
\int_{\Omega}|g(T,x,y)|^2d\sigma\le C(T,\omega)\int_{0}^{T}\int_{\omega}|g(t,x,y)|^2d\sigma dt.
\end{equation}
The observability inequality (\ref{eq13}) means that the energy of the solution of (\ref{eq12}) concentrated in $\omega$ yields an upper bound of the energy in time $T$ everywhere in $\Omega$.

Finally, note that the proof of Theorem \ref{Theo2} is divided in two distinct steps:
\begin{enumerate}
	\item Prove that for any time $T\le\log(1/\cos a)$, the equation is not observable in $\omega$ in time $T$;
	\item Prove that there exists $T^{*}>0$ such that, for every $T\ge T^{*}$, the equation is observable in $\omega$ in time $T$.
\end{enumerate}

\subsection{Comments and open questions}\label{ss.intro4}
The lower bound of minimal time $T_{\mbox{\scriptsize{min}}}$ in Theorem \ref{Theo1} appears to be the Agmon distance between the control region $\omega = \omega_{a,b}\times(0,2\pi)$ and the degeneracy (the equator in fact) $\Phi^{-1}(\cE) = \{x=0\}$, that is to say, the lower bound of $T_{\mbox{\scriptsize{min}}}$ satisfies (\ref{eq4}) with $q(x) = \tan x$ and $L_{+}=a$. However, it should be noted that the result of \cite{Beau2} can not be directly applied to our case. 

In fact,  in \cite{Beau2}, the authors investigate the \textit{boundary null-controllability} of the $2D$ parabolic Grushin equation with more general potential satisfying assumptions \eqref{eq3}. In our case, equation \eqref{eq10} is the coordinate representation of equation \eqref{eq:heat-intrinsic}, posed on the whole sphere $\mathbb{S}^{2}$, which is a compact manifold without boundary. Moreover the potential $q(x) = \tan x$ does not satisfies assumptions \eqref{eq3} on $(-\pi/2,\pi/2)$.

The presence of two spherical crowns in the control region is a technical assumption required in the proof of the Carleman estimate in Section \ref{s.Carleman} and is related to the symmetry of the singularity locations (the north and south poles) with respect to the equator $\Phi^{-1}(\cE)  = \{x=0\}$. The case where the control function acts only on one spherical crown (i.e., $\omega=(a,b)\times(0,2\pi)$) remains open. Another interesting open question is to show the sharpness of the lower bound of the minimal time, as done in \cite{Beau4} for the $2D$ parabolic Grushin equation.  
\begin{Rema}\label{Rema6}
	If the elevation angle (latitude) $a$ of the control region $\omega$ with respect to equator is equal to zero, i.e., if $\omega$ contains the equator, then the strategy used in Subsection \ref{ss.proof_negative} to obtain the lower bound of the minimal time $T_{\mbox{\scriptsize{min}}}$ can not be applied. On the other hand, if the control acts only on one spherical crown (i.e., $\omega = (a,b)\times (0,2\pi)$), the proof presented here still applies and shows that the lower bound of minimal time is still $\log(1/\cos a)$.
\end{Rema}

\subsection{Structure of the paper}\label{ss.intro5}
The first part of the paper, contained in Section \ref{s.Well}, is devoted to general results about parabolic Grushin equation (\ref{eq10}). Here we prove the well-posedness of equation in Subsection \ref{ss.Well1}, we study the properties of Fourier components of solution of the adjoint system (\ref{eq12}) in Subsection \ref{ss.Well2} as well as their dissipation rate. In Subsection \ref{ss.Observability}, we present the strategy of the proof of Theorem \ref{Theo2}, that is, we show how the uniform observability estimate of Fourier components yields the observability estimate of solution of the adjoint system.

In Section \ref{s::weight}, we recast the equation satisfied by the Fourier components in spaces $ L^2$ without weight using unitary transformations.

In Section \ref{s.Carleman} we prove a global Carleman estimate for the $1D$ parabolic equation satisfy by the Fourier components for non zero frequencies in a space $ L^2$ without weight.

Finally, Section \ref{s.proof} is devoted to the proof of Theorem \ref{Theo2} (or equivalently of Theorem \ref{Theo1} and therefore the proof of Theorem \ref{thm:main-intrinsic}). In Subsection \ref{ss.proof_negative} we prove the negative statement of Theorem \ref{Theo2} and the positive statement in Subsection \ref{ss.proof_positive}.

\paragraph{Acknowledgement.}
This work started when the author was a Master 2 student at the Institut de Mathématiques et de Sciences Physiques in Dangbo, Benin. The author would like to thank its thesis advisors Yacine Chitour and Dario Prandi for bringing this problem to its attention and for interesting and fruitful discussions. He would like to thank also the reviewers for the careful reading of the paper and for their valuable comments having enhanced the presentation of the paper.

\section{Well-posedness, Fourier decomposition and strategy for the proof}\label{s.Well}
\subsection{Well-posedness of Cauchy problems}\label{ss.Well1}

It is interesting and useful to start with the well-posedness of the parabolic equation \eqref{eq:heat-intrinsic} associated to the intrinsic operator $\cL$ as defined in \eqref{intrinsec-grushin}. Since $\left\{X,Y,[X,Y]\right\}_p$ generates the tangent space $T_p\mathbb{S}^{2}$ for any $p\in \mathbb S^2$, it follows from Strichartz \cite[p.260-261]{Strichartz} that $-\cL$ with domain
\begin{equation}\label{eq::intrisic-domain}
\operatorname{D}(\cL) = \big\{u\in L^2(\mathbb{S}^2,\mu):\cL u:=-X^{+}Xu-Y^{+}Yu\in L^2(\mathbb{S}^2,\mu)\big\},
\end{equation}
is a densely-defined, self-adjoint operator on $ L^2(\mathbb{S}^2,\mu)$, hypoelliptic \cite[Theorem 1.1]{Hormander} and has a compact resolvent. Therefore, its spectrum is real, discrete and consists of eigenvalues with finite multiplicity, labelled in increasing order, that is, $(\lambda_m)_{m\in\NN^{*}}$, with $0=\lambda_1<\lambda_2\le\cdots\le\cdots$, with $\lambda_m\rightarrow\infty$ as $m\rightarrow\infty$. Moreover, there exists an orthonormal Hilbert basis $(\varphi_m)_{m\in\NN^{*}}$ of $ L^2(\mathbb{S}^2,\mu)$ consisting of eigenfunctions of $\cL$ associated with the eigenvalues $(\lambda_m)_{m\in\NN^{*}}$.

\begin{Rema}
	It should be noted that the sub-Riemannian manifold $\mathbb{S}^2$ endowed with the $2-$ARS described in Subsection \ref{ss.intro2} is obtained as restriction of complete Riemannian manifolds. So, it is completes as metric space. It follows that, the sub-Riemannian Laplacian $\cL$ defined on $\cC^\infty(\mathbb{S}^2)$ is essentially self-adjoint in $ L^2(\mathbb{S}^2,\mu)$ and the domain of its unique self-adjoint extension coincides with \eqref{eq::intrisic-domain} (see, Strichartz \cite[p.261]{Strichartz}, \cite[p.50 and Theorem 2.4]{Strichartz_2}).
\end{Rema}

We define the intrinsic semigroup on $ L^2(\mathbb{S}^2,\mu)$ denoted $(e^{t\cL})_{t\ge 0}$, as the family of operator $ L^2(\mathbb{S}^2,\mu)\rightarrow L^2(\mathbb{S}^2,\mu)$ defined as follows for every $t\ge 0$: given $f_0\in L^2(\mathbb{S}^2,\mu)$, $e^{t\cL}f_0$ is the unique solution at time $t$ of the homogeneous equation of \eqref{eq:heat-intrinsic}, which is $\cC^\infty$ on $]0,+\infty[\times\mathbb{S}^2$ (by the hypoellipticity of operator $\cL$) and given by
\begin{equation}\label{eq::intrisic-semigroup}
e^{t\cL}f_0 = \sum\limits_{m\in\NN^{*}}e^{-t\lambda_{m}}\langle f_0,\varphi_m\rangle_{ L^2(\mathbb{S}^2,\mu)}\varphi_m.
\end{equation}

Let us state the following well-posedness result of the intrinsic parabolic equation \eqref{eq:heat-intrinsic} whose proof is classical (see, e.g., \cite[Chapter 4]{Pazy}).

\begin{Prop}
	Given $T>0$, $f_0\in L^2(\mathbb{S}^2,\mu)$ and $v := \mathbbm{1}_\omega u\in L^2(0,T; L^2(\mathbb{S}^2,\mu))$, there exists a unique solution
	$$
	f\in\cC([0,T]; L^2(\mathbb{S}^2,\mu))\cap L^2((0,T);\operatorname{D(\cL)})
	$$
	of equation \eqref{eq:heat-intrinsic}, and $f$ is given by Duhamel's formula
	\begin{equation}
	f(t) = e^{t\cL}f_{0}+\int_{0}^{t}e^{(t-s)\cL}v(s)ds,\hspace{0.5cm}t\in[0,T].
	\end{equation}
\end{Prop}

We now can provide an argument about the well-posedness of the parabolic equation \eqref{eq10} associated with the spherical Grushin operator $\Phi^{*}\cL$ defined in \eqref{eq::intrinsic coordinate} (or equivalently in \eqref{eq9}). 

Let $\operatorname{H_{\sigma}} :=  L^2(\Omega,\sigma)$, and denote by $\langle\cdot,\cdot\rangle_{\operatorname{H_{\sigma}}}$ and $\|\cdot\|_{\operatorname{H_{\sigma}}}$, respectively, the scalar product and norm in $\operatorname{H_{\sigma}}$.We have that $(\cL,\operatorname{D}(\cL))$ and $(\Phi^{*}\cL,\operatorname{D}(\Phi^{*}\cL))$ are unitarily equivalent, where we let
\begin{equation}
\Phi^{*}\mathcal{L}= \operatorname{T}_{\Phi}^{-1}\cL\operatorname{T_\Phi}\qquad\mbox{on}\qquad\operatorname{D}(\Phi^{*}\cL)=\operatorname{T}_{\Phi}^{-1}(\operatorname{D}(\cL)).
\end{equation}
Here, $\operatorname{T_\Phi}$ is the unitary transformation defined in \eqref{eq::unitary transformation}, $\operatorname{T}_{\Phi}^{-1}$ being is inverse. So, $\Phi^{*}\cL$ with domain $\operatorname{D}(\Phi^{*}\cL)$ is a densely-defined, self-adjoint operator on $\operatorname{H_{\sigma}}$ and has compact resolvent.  We also remark that $v\in\operatorname{D}(\Phi^{*}\cL)$ means $v=u\circ\Phi$ for some $u\in \operatorname{D}(\cL)$. So, we have the following.
\begin{Lem}\label{Lem::bdry conditions in 2D}
  Let $v\in \operatorname{D}(\Phi^{*}\cL)$. Then, we have that $v,\;\Phi^{*}\cL v\in\operatorname{H_{\sigma}}$, the function $y \mapsto v(\pi/2,y)$ (resp.~$y\mapsto v(-\pi/2,y)$) is constant, and $y\mapsto v(x,y)$ is $2\pi$-periodic for any $x\in [-\pi/2,\pi/2]$. Moreover, the following functions are well defined and real-valued: 
	\begin{equation}\label{eq::bdry conditions in 2D}
	  y\in[0,2\pi)\mapsto \dx v(\pi/2, y), \quad y\in[0,2\pi)\mapsto \dx v(-\pi/2, y), \quad (x,y)\in\Omega \mapsto\tan x\,\dy v(x,y).
	\end{equation}
\end{Lem}
\begin{Rema}
	We stress that boundary conditions of Lemma~\ref{Lem::bdry conditions in 2D} are naturally associated to Cauchy problems \eqref{eq10} and \eqref{eq12}.
\end{Rema}

Let $\{W_{\ell,n}\}_{\ell\in\NN,-\ell\le n\le\ell}$ denotes the family of spherical harmonics, defined by
\begin{equation}\label{eq22}
W_{\ell,n}(x,y) = \sqrt{\frac{2\ell+1}{4\pi}\frac{(\ell-n)!}{(\ell+n)!}}P_\ell^n(\sin x)e^{iny},\hspace{1cm}\forall x\in [-\pi/2,\pi/2]\times[0,2\pi),
\end{equation}
with $P_{\ell}^{n}$ being associated Legendre functions of the first kind. Then we can check that each $W_{\ell,n}$ lies in $\operatorname{D(\Phi^{*}\cL)}$ and the operator $\Phi^{*}\cL$ satisfies (see, \cite[p.9]{Casarino} and references within)
\begin{equation}\label{eq17}
-(\Phi^{*}\cL) W_{\ell,n} = \lambda_{\ell,n}W_{\ell,n},
\qquad\qquad\lambda_{\ell,n} := \ell(\ell+1)-n^2,\qquad\forall\;|n|\le \ell\in\NN.
\end{equation}
Moreover, by using the identification $\operatorname{H_{\sigma}}\cong  L^2((-\pi/2,\pi/2);\cos xdx)\otimes L^2((0,2\pi),dy)$, we have that $\{W_{\ell,n}\}_{\ell\in\NN,-\ell\le n\le\ell}$ form an orthonormal Hilbert basis of the space $\operatorname{H_{\sigma}}$ \cite[p.137]{Stein} so that, $\operatorname{D(\Phi^{*}\cL)}$ is a non-empty and dense subspace of $\operatorname{H_{\sigma}}$. 

The spherical Grushin semigroup on $\operatorname{H_{\sigma}}$ denoted $(e^{t\Phi^{*}\cL})_{t\ge 0}$ is then the family of operators $\operatorname{H_{\sigma}}\rightarrow\operatorname{H_{\sigma}}$ defined as follows for every $t\ge 0$: given $f_0\in\operatorname{H_{\sigma}}$, $e^{t\Phi^{*}\cL}f_0$ is the unique solution at time $t$ of the homogeneous equation of \eqref{eq10}, which is $\cC^\infty$ on $]0,+\infty[\times\Omega$ and given by
\begin{equation}\label{eq::spherical Grushin semigroup}
e^{t\Phi^{*}\cL}f_0 = \sum\limits_{|n|\le \ell\in\NN}e^{-t\lambda_{\ell,n}}\langle f_0,W_{\ell,n}\rangle_{\operatorname{H_{\sigma}}}W_{\ell,n}.
\end{equation}

We now can state the following well-posedness result of the parabolic equation \eqref{eq10} associated with the spherical Grushin operator $\Phi^{*}\cL$ (see, e.g., \cite[Chapter 4]{Pazy}).
\begin{Prop}
	Given $T>0$, $f_0\in\operatorname{H_{\sigma}}$ and $v := \mathbbm{1}_\omega u\in L^2(0,T;\operatorname{H_{\sigma}})$, there exists a unique solution
	$$
	f\in\cC([0,T];\operatorname{H_{\sigma}})\cap L^2((0,T);\operatorname{D(\Phi^{*}\cL)})
	$$
	of equation \eqref{eq10}, and $f$ is given by Duhamel's formula
	\begin{equation}\label{eq21}
	f(t) = e^{t\Phi^{*}\cL}f_{0}+\int_{0}^{t}e^{(t-s)\Phi^{*}\cL}v(s)ds,\hspace{0.5cm}t\in[0,T].
	\end{equation}
\end{Prop}
We end this section by the following Hardy-Poincaré inequality in the Sobolev space $ H_0^1(-\pi/2,\pi/2)$. The reader could find another proof of such inequality in \cite[p.92]{Opic}.
\begin{Lem}\label{lem:Hardy-Poincaré}
	Let $w\in  H_0^1(-\pi/2,\pi/2)$. Then, it holds
	\begin{equation}\label{eq::Poncaré-Hardy}
		\int_{-\frac\pi 2}^{\frac\pi 2}\frac{|w(x)|^2}{\cos ^2x}dx\le 4\int_{-\frac\pi 2}^{\frac\pi 2}|w'(x)|^2dx.
	\end{equation}
	\begin{proof}
			First of all, highlight that, the main theorem of \cite[p. 199]{Chisholm} is valid in the space $\displaystyle L^2([0,\pi/2))$ by repeating the same proof with $\pi/2$ playing the role of $\infty$. Note that, such results are consequence of lemma of \cite[p.42]{Everitt}, adapting the proof in the case at hand. Let $f\in\displaystyle L^2(-\pi/2,\pi/2)$. Then,
		$$
		\int_{0}^{\frac\pi 2}(|f(x)|^2+|f(-x)|^2)dx = \int_{-\frac\pi 2}^{\frac\pi 2}|f(x)|^2dx,
		$$
		so that, $f, \widetilde{f}\in\displaystyle L^2([0,\pi/2))$, where $\widetilde{f}(x) = f(-x), x\in[0,\pi/2)$. We adopt the notations of \cite{Chisholm}, then \cite[eq.(1.3)]{Chisholm} recasts
		\begin{equation}\label{eq::operator S}
		(\operatorname{S}f)(x) = \phi(x)\int_{x}^{\frac\pi 2}\psi(t)f(t)dt,\qquad\qquad x\in[0,\pi/2).
		\end{equation}
		We let $\displaystyle\phi(x) = 1/\cos x$ and $\psi(x) = 1$ for all $x\in[0,\pi/2)$. Thus, $\psi\in L^2([0,\pi/2))$, $\phi\in L^2([0,\alpha])$ for all $0<\alpha<\pi/2$ and 
		$$
		\int_{0}^{x}|\phi(t)|^2dt\int_{x}^{\frac\pi 2}|\psi(t)|^2dt = \left(\frac{\pi}{2}-x\right)\tan x\le 1,\qquad\forall x\in[0,\pi/2],
		$$
		so that, \cite[eq. (2.1) to (2.3)]{Chisholm} are satisfy with $K:=1$. It follows that $\operatorname{S}$ defined in \eqref{eq::operator S} is a bounded operator from $ L^2([0,\pi/2))$ to itself and the following holds for all $f\in\displaystyle L^2([0,\pi/2))$,
		\begin{equation}\label{eq:: norm of S}
		\int_{0}^{\frac\pi 2}|(\operatorname{S}f)(x)|^2dx\le 4K\int_{0}^{\frac\pi 2}|f(x)|^2dx = 4\int_{0}^{\frac\pi 2}|f(x)|^2dx.
		\end{equation}
		Let now $w\in H_0^1(-\pi/2,\pi/2)$, then $w, w' \in \displaystyle L^2(-\pi/2,\pi/2)$ and $w(\pm\pi/2) = 0$. Letting $f=w'$ in \eqref{eq::operator S} we find
		$$
		(\operatorname{S}w')(x) = \phi(x)\int_{x}^{\frac\pi 2}\psi(t)w'(t)dt = -\frac{w(x)}{\cos x},\qquad x\in[0,\pi/2).
		$$
		Hence \eqref{eq:: norm of S} leads to
		$$
		\int_{0}^{\frac\pi 2}\frac{|w(x)|^2}{\cos ^2x}dx\le 4\int_{0}^{\frac\pi 2}|w'(x)|^2dx.
		$$
		We argue similarly for $\widetilde{w}$, and combining both inequalities we complete the proof of lemma.
	\end{proof}
\end{Lem}

\subsection{Fourier decomposition of solution}\label{ss.Well2}
Using a complete orthonormal eigenbasis of $ L^2((0,2\pi),dy)$, we can separate the space $\operatorname{H_{\sigma}} = \oplus_{n\in\ZZ}^{\perp}\cH_{n}$, where $\cH_{n}\cong  L^2((-\pi/2,\pi/2);\cos xdx)$. Therefore, one has for every $t\ge 0$,
\begin{equation}\label{eq::semigroup diagonalisation}
e^{t\Phi^{*}\cL} = \bigoplus_{n\in\ZZ}^{\perp}e^{t\cL_n},
\end{equation}
where for any $n\in\ZZ$, the operator $\cL_n$ is defined on $\cH_n$ by
\begin{eqnarray}\label{eq27}
\operatorname{D}(\cL_{n}) = \left\{v\in\cH_n:\cL_{n}v\in\cH_n,\;v(\pm\pi/2),\;v'(\pm\pi 2)\in\RR\right\},
\end{eqnarray}
\begin{equation}\label{eq26}
\cL_{n}v = \frac{1}{\cos x}(\cos xv')'-n^{2}\tan^{2}xv,\qquad\forall v\in \operatorname{D}(\cL_{n}).
\end{equation}

Since the solution $g$ of \eqref{eq12} belongs to $\cC([0,T];\operatorname{H_{\sigma}})$, the function $y\mapsto g(t,x,y)$ belongs to $ L^2((0,2\pi),dy)$ for a.e. $(t,x)\in(0,T)\times(-\pi/2 ,\pi/2)$. So, the adjoint system \eqref{eq12} is formally equivalent to the following family of one-dimensional parabolic equations indexed by $n\in\ZZ$,
\begin{equation}\label{eq23}
\begin{cases}
\partial_tg_n-\cL_ng_n = 0,&\mbox{ a.e. in } (0,T)\times(-\pi/2,\pi/2),\cr
g_n(0,x) = g_{0,n}(x),&\;x\in (-\pi/2,\pi/2).
\end{cases}
\end{equation}
Here, the n-th Fourier component $g_n$ is given by
\begin{equation}\label{eq24}
g_n(t,x) = \int_{0}^{2\pi}g(t,x,y)e^{iny}dy,\qquad(t,x)\in(0,T)\times(-\pi/2,\pi/2).
\end{equation}
We derive in the following lemmas some properties of functions belonging to $\operatorname{D}(\cL_{n})$ as well as their behaviour at $\pm\pi/2$. We begin by the case $n=0$.
\begin{Lem}\label{lem::fourier 1}
	Let $v\in\operatorname{D}(\cL_0)$. Then $v$  belongs to the Sobolev space $ H^1(-\pi/2,\pi/2)$ and $v$ is locally absolutely continuous on $[-\pi/2,\pi/2]$. Moreover, it holds
	\begin{equation}\label{eq33}
	\lim\limits_{x\rightarrow-\frac{\pi}{2}^{+}}v'(x)=\lim\limits_{x\rightarrow\frac{\pi}{2}^{-}}v'(x)=0.
	\end{equation}
\end{Lem}
\begin{proof}
	Let $v\in \operatorname{D}(\cL_0)$. Then  $v,\;\cL_0v\in\cH_0\cong L^2((-\pi/2,\pi/2);\cos xdx)$ and $v(\pm\pi/2), v'(\pm\pi/2)\in~\RR$. One has, 
	\begin{equation}\label{eq::A0}
	\|v'\|_{\cH_0}^2 = -\langle\cL_0v,v\rangle_{\cH_0}<\infty.
	\end{equation}
	Since it is clear that $\sin xv', \cos xv, \sqrt{\cos x}v, \sqrt{\cos x}v'\in\cH_0$, it holds
	\begin{equation}\label{eq::A02}
	\|v\|_{ H^1(-\pi/2,\pi/2)}^2 = 2\left[\langle\cL_0v,\cos xv-\sin xv'\rangle_{\cH_0}+\|\sqrt{\cos x}v\|_{\cH_0}^2+\|\sqrt{\cos x}v'\|_{\cH_0}^2	\right]<\infty.
	\end{equation}
	It follows that $v$ belongs to the Sobolev space $ H^1(-\pi/2,\pi/2)\hookrightarrow W^{1,1}(-\pi/2,\pi/2)$, and then $v$ is locally absolutely continuous on $[-\pi/2,\pi/2]$. On the other hand, one has
	\begin{eqnarray}\label{eq::second derivative}
	\int_{-\frac\pi 2}^{\frac\pi 2}\frac{|v'(x)|^2}{\cos x}dx&\le&
	\int_{-\frac\pi 2}^{\frac\pi 2}\left(|v''(x)|^2+\frac{|v'(x)|^2}{\cos^2x}\right)\cos xdx\nonumber\\
	&=&\int_{-\frac\pi 2}^{\frac\pi 2}|\cL_0v(x)|^2\cos xdx+\left|v'(\pi/2)\right|^2+\left|v'(-\pi/2)\right|^2<\infty.
	\end{eqnarray}
	Since $\cos(\pm{\pi}/{2})=0$ this implies \eqref{eq33}. Moreover \eqref{eq::second derivative} shows also that $v''\in\cH_0$. In particular,  $\tan xv'\in\cH_0$, since $\cL_0v\in\cH_0$ and $v''\in\cH_0$. This completes the proof of lemma.
\end{proof}


In the case $n\in\ZZ\bs\{0\}$, we have the following
\begin{Lem}\label{Fourier.lem1}
	Let $n\in\ZZ\bs\{0\}$ and $v\in \operatorname{D}(\cL_{n})$. Then $v$ belongs to the Sobolev space $ H^1(-\pi/2,\pi/2)$ and $v$ is locally absolutely continuous on $\displaystyle[-\pi/2,\pi/2]$. Moreover, it holds
	\begin{equation}\label{eq34}
	\lim\limits_{x\rightarrow-\frac{\pi}{2}^{+}}v(x)=\lim\limits_{x\rightarrow\frac{\pi}{2}^{-}}v(x)=0\qquad\mbox{and}\qquad\lim\limits_{x\rightarrow-\frac{\pi}{2}^{+}}\frac{v(x)}{\sqrt{\cos x}}  = \lim\limits_{x\rightarrow\frac{\pi}{2}^{-}}\frac{v(x)}{\sqrt{\cos x}}=0.
	\end{equation}
\end{Lem}
\begin{proof}
	Let $n\in\ZZ\bs\{0\}$ and $v\in \operatorname{D}(\cL_{n})$. Since  $v,\;\cL_{n}v\in\cH_n\cong L^2((-\pi/2,\pi/2);\cos xdx)$ and $v(\pm\pi/2),\;v'(\pm\pi 2)\in\RR$, one has,
	\begin{equation}\label{eq::A1}
	\|v'\|_{\cH_n}^2 \le \int_{-\frac\pi 2}^{\frac\pi 2}\left(|v'(x)|^2+|n\tan xv|^2\right)\cos xdx = -\langle v,\cL_{n}v\rangle_{\cH_n}<\infty,
	\end{equation}
	showing in particular that $\tan xv\in\cH_n$. It follows that
	\begin{equation}\label{eq::lem useful 1}
	\|\cos^{-1}xv\|_{\cH_n}^2 = \|\tan xv\|_{\cH_n}^2+\|v\|_{\cH_n}^2<\infty,
	\end{equation}
	and by Cauchy-Schwarz' inequality,
	\begin{equation}\label{eq::A11}
	\|v\|_{ L^2(-\pi/2,\pi/2)}^2\le\|\cos^{-1}xv\|_{\cH_n}\|v\|_{\cH_n}<\infty.
	\end{equation}
	So,
		\begin{equation}\label{eq::lem useful}
	\|\cos^{-1}v\|_{ L^2(-\pi/2,\pi/2)}^2 = \|v\|_{ L^2(-\pi/2,\pi/2)}^2+\frac{1}{n^2}\left[\left\langle-\cL_{n}v,\frac{v}{\cos x}\right\rangle_{\cH_n}-\int_{-\frac\pi 2}^{\frac\pi 2}\tan x|v(x)|^2dx\right]<\infty.
	\end{equation}
	It is clear that $\sin xv'\in\cH_n$, so that,
	\begin{equation}\label{eq::A12}
	\|v'\|_{ L^2(-\pi/2,\pi/2)}^2 = -2\langle\cL_nv,\sin xv'\rangle_{\cH_n}+n^2\int_{-\frac\pi 2}^{\frac\pi 2}\sin^2x(3+\tan^2x)|v(x)|^2dx<\infty,
	\end{equation}
	by \eqref{eq::lem useful 1}, \eqref{eq::A11} and \eqref{eq::lem useful}. Thus,  $v$ belongs to
	 the Sobolev space $ H^1(-\pi/2,\pi/2)\hookrightarrow W^{1,1}(-\pi/2,\pi/2)$, and then $v$ is locally absolutely continuous on $\displaystyle[-\pi/2,\pi/2]$. So,
	\begin{equation}\label{eq::representation}
	v(x_2)-v(x_1) = \int_{x_1}^{x_2}v'(s)ds\qquad \forall x_1,x_2\in[-\pi/2,\pi/2].
	\end{equation}

	 On the other hand, since  $\tan xv\in\cH_n$, the first identity of \eqref{eq34} immediately follows. Let us turn to an argument for the second identity of \eqref{eq34}. Let $\varepsilon>0$, then by the first identity of \eqref{eq34}, and \eqref{eq::representation} one has for all $v\in \operatorname{D}(\cL_{n})$, $n\neq 0$,
	$$\left|v\left(-\frac\pi 2+\varepsilon\right)\right|\le\int_{-\frac\pi 2}^{-\frac\pi 2+\varepsilon}|v'(t)|dt\le\|v'\|_{\infty}\varepsilon.$$
	Hence,
	$$\lim\limits_{x\rightarrow-\frac{\pi}{2}^{+}}\frac{|v(x)|}{\sqrt{\cos x}}=\lim\limits_{\varepsilon\rightarrow0}\frac{|v(-\frac\pi 2+\varepsilon)|}{\sqrt{\cos(-\frac\pi 2+\varepsilon)}}\le\|v'\|_{\infty}\lim\limits_{\varepsilon\rightarrow0}\frac{\varepsilon}{\sqrt{\sin\varepsilon}} = 0.$$
	The proof of the limit at $\pi/2$ is similar.
\end{proof}

\begin{Rema}\label{rmq::useful}
	Lemmas \ref{lem::fourier 1} and \ref{Fourier.lem1} show in particular that, for all $v\in \operatorname{D}(\cL_n)$, $\cL_nv$ has a meaning $\operatorname{a.e.}$ in $(-\pi/2,\pi/2)$. Moreover, Lemma \ref{Fourier.lem1} also shows that the domain $\operatorname{D(\cL_{n})}$ is a subspace of the Sobolev space $\displaystyle H_0^1(-\pi/2,\pi/2)$ in the case $n\in\ZZ\bs\{0\}$. Therefore, Lemma~\ref{lem:Hardy-Poincaré} holds true in $\operatorname{D(\cL_{n})}$.
\end{Rema}
%

The following proposition is the direct consequence of the section \ref{ss.Well1}.  We refer also to \cite[p. 68]{Naimark} in which the theory of singular Sturm-Liouville equation is well-elaborated. 
\begin{Prop}\label{fourier.prop1}
	Let $n\in\ZZ$. Then, $-\cL_{n}:\operatorname{D}(\cL_{n})\subset \cH_{n}\rightarrow\cH_{n}$ is a densely defined, self-adjoint positive operator with compact resolvent.
\end{Prop}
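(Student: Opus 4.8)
The plan is to regard $\cL_n$ as a singular Sturm--Liouville operator acting in the weighted Hilbert space $\cH_n=L^2(I;\gamma\,dx)$, whose two endpoints $x=\pm\pi/2$ are both singular (there the leading coefficient $\gamma=\cos x$ and the weight degenerate, while the potential $n^2p$ blows up when $n\neq0$), and to analyse these endpoints by Weyl's limit point / limit circle theory in the spirit of \cite[p.~68]{Naimark}. Density is immediate: the test functions $\cC_c^\infty(I)$ lie in $D(\cL_n)$, since they vanish near $\pm\pi/2$ and hence satisfy the stated endpoint conditions trivially, and they are dense in $\cH_n$.

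For symmetry and positivity I would use Green's formula: for $u,v\in D(\cL_n)$,
$$\langle\cL_n u,v\rangle_{\cH_n}=\int_I\bigl(\gamma\,u'\,\overline{v'}+n^2p\,u\,\overline v\bigr)\,dx-\Bigl[\gamma\,u'\,\overline v\Bigr]_{-\pi/2}^{+\pi/2}.$$
Subtracting the analogous identity for $\langle u,\cL_n v\rangle_{\cH_n}$ leaves only the Lagrange bracket $\gamma(u'\overline v-u\,\overline{v'})$ at the two endpoints, which I will show vanishes on $D(\cL_n)$; this yields symmetry. Taking $u=v$ and using $\gamma=\cos x\ge0$ and $p=\sin^2x/\cos x\ge0$ on $I$ gives $\langle\cL_n v,v\rangle_{\cH_n}\ge0$, so $\cL_n$ is positive.

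The heart of the argument is self-adjointness, for which I would determine the endpoint behaviour of the solutions of $\cL_nv=0$. Setting $s=\pi/2-x$ one finds $\gamma\approx s$ and $p\approx 1/s$, so that $\cL_n$ degenerates into the Bessel-type operator $-\ddot v-s^{-1}\dot v+n^2s^{-2}v$; its indicial equation has roots $\pm|n|$, giving solutions $\sim s^{\pm|n|}$ when $n\neq0$ and $\sim1,\log s$ when $n=0$. Testing integrability against $\gamma\,dx\approx s\,ds$ near $s=0$ shows that for $n\neq0$ only $s^{|n|}$ is square integrable, so each endpoint is in the limit point case; the deficiency indices are then $(0,0)$, the operator on the maximal domain is already self-adjoint, and the prescribed condition $\lim_{x\to\pm\pi/2}(\gamma v')(x)=0$ holds automatically (indeed $\gamma v'\sim s^{|n|}\to0$). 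For $n=0$ both solutions are square integrable, each endpoint is limit circle, the deficiency indices are $(2,2)$, and exactly two separated boundary conditions are required; the condition $\lim_{x\to\pm\pi/2}(\gamma v')(x)=0$ discards the logarithmic solution (for which $\gamma v'$ tends to a nonzero constant) and thereby selects a self-adjoint extension. In both cases $D(\cL_n)$ as defined in \eqref{eq27} is precisely the self-adjoint realisation.

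Finally, for the compact resolvent I would show that $(\cL_n+\Id)^{-1}$ is compact by proving that the form domain $\{v\in\cH_n:\int_I(\gamma|v'|^2+n^2p|v|^2)\,dx<\infty\}$, with its natural norm, embeds compactly into $\cH_n$: for $n\neq0$ this follows from the confining potential $n^2p\to+\infty$ at the endpoints, and for $n=0$ from a weighted Rellich embedding, the weight $\cos x$ vanishing only to first order at $\pm\pi/2$. Equivalently, one may build the Green kernel of $\cL_n+\Id$ from the square-integrable endpoint solutions and check that it is Hilbert--Schmidt with respect to $\gamma(x)\gamma(\xi)\,dx\,d\xi$. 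I expect the self-adjointness step to be the main obstacle --- specifically the uniform handling of the limit point regime $n\neq0$ versus the limit circle regime $n=0$ and the verification that the boundary conditions of \eqref{eq27} cut out exactly the self-adjoint domain --- with the degenerate weight in the compactness argument a secondary technical difficulty.
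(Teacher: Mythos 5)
Your proposal is correct and follows essentially the same route as the paper, which gives no proof of its own but simply cites Naimark \cite[p.~68]{Naimark} for exactly this singular Sturm--Liouville / Weyl limit-point--limit-circle analysis of the two singular endpoints $x=\pm\pi/2$. The only step deserving one more line of care is your claim that $\lim_{x\to\pm\pi/2}(\gamma v')(x)=0$ holds automatically on the maximal domain when $n\neq0$: this requires a variation-of-parameters estimate showing that the inhomogeneous contribution to $\gamma v'$ coming from $\cL_n v=f\in\cH_n$ is $O(\pi/2-|x|)$, not merely the asymptotics $s^{\pm|n|}$ of the homogeneous solutions, but the conclusion is correct.
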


One can check that the functions $v_{n,\ell}$ defined for $\ell\in\NN$, $n\in\ZZ$ and $|n|\le\ell$ by
\begin{equation}\label{eq28}
v_{n,\ell}(x) = \sqrt{\frac{2\ell+1}{2}\frac{(\ell-n)!}{(\ell+n)!}}P_\ell^n(\sin x),\hspace{1cm}\forall x\in [-\pi/2,\pi/2],
\end{equation}
form a complete orthonormal set of the Hilbert space $\cH_{n}$, with $P_{\ell}^{n}$ being the associated Legendre function of the first kind. Moreover, each $v_{n,\ell}$ lies in $\operatorname{D}(\cL_{n})$ and we have $$-\cL_{n}v_{n,\ell} = (\ell(\ell+1)-n^2)v_{n,\ell}.$$ 
So the functions $v_{n,\ell}$
are the eigenfunctions of operators $-\cL_{n}$ with eigenvalues $\lambda_{\ell,n} = \ell(\ell+1)-n^2$.

Thanks to Proposition \ref{fourier.prop1},
it is then straightforward to prove the following
\begin{Prop}\label{fourier.prop2}
	Let $T>0$. For every $n\in\ZZ$, the n-th Fourier component $g_n$ of $g$, as given by (\ref{eq24}), is the unique solution of \eqref{eq23} lying in the class
	\begin{equation}\label{eq30}
	\cC([0,T];\cH_{n})\cap\cC((0,T);\operatorname{D}(\cL_{n}))\cap\cC^{1}((0,T);\cH_{n}).
	\end{equation}
	Moreover, it is equal to
	\begin{equation}
	e^{t\cL_n}g_{0,n} = \sum\limits_{\ell\in\NN}e^{-\lambda_{\ell,n}t}\langle g_{0,n},v_{n,\ell}\rangle_{\cH_n}v_{n,\ell},
	\end{equation}
	where $g_{0,n}\in\cH_n$ is given by $g_{0,n}(x) = \displaystyle\int_{0}^{2\pi}g_{0}(x,y)e^{iny}dy$, and $g_0$ being the initial condition in equation \eqref{eq12}.
\end{Prop}
\begin{Rema}\label{fourier.rmq2}
	In fact, we may show by an inductive argument that for all $n\in\ZZ$
	\begin{equation}\label{eq31}
	g_n\in\cC^{\infty}((0,T);\operatorname{D}(\cL_{n})).
	\end{equation}
	Moreover, $g_n$ is $\cC^\infty$ on $\displaystyle]0,+\infty[\times(-\pi/2,\pi/2)$.
\end{Rema}

By Proposition \ref{fourier.prop2}, the following dissipation rate of Fourier component $g_n$ is satisfies
\begin{equation}\label{eq32}
\|g_n(T,\cdot)\|_{\cH_{n}}\le e^{-|n|(T-t)}\|g_n(t,\cdot)\|_{\cH_{n}},\qquad\qquad\forall t\in (0,T).
\end{equation}
\begin{Nota}
	In what follows, to simplify the notation, we shall assume $n\in\NN$. The same considerations hold for $n\in\ZZ_{-}$ by replacing $n$ with $|n|$.
\end{Nota}

\subsection{Strategy for the proof of Theorem \ref{Theo2} and uniform observability}\label{ss.Observability}

We show in this subsection how the proof of Theorem \ref{Theo2} reduces to the proof of an observability inequality for the $1D$ parabolic equations \eqref{eq23} that is uniform with respect to $n\in\NN$.
Recall that if $g$ is the solution of \eqref{eq12}, then it can be represented by 
\begin{equation}\label{eq44}
g(t,x,y) = \sum\limits_{n\in\ZZ}g_n(t,x)e^{iny}, \qquad
\text{ for a.e. }(t,x,y)\in(0,T)\times\Omega.
\end{equation}
We also emphasize that, by Bessel-Parseval's equality, one has, for a.e. $t\in(0,T)$
and every $-\pi/2\le a_1\le b_1\le\pi/2$, that
\begin{equation}\label{eq45}
\int_{a_1}^{b_1}\int_{0}^{2\pi}|g(t,x,y)|^2d\sigma=\sum\limits_{n\in\ZZ}\int_{a_1}^{b_1}|g_n(t,x)|^2\cos xdx.
\end{equation}
Thus, if there exists a positive constant $C>0$, independent of $n\in\NN$, and such that the following uniform observability holds true for the system \eqref{eq23}
\begin{equation}\label{eq46}
\int_{-\frac\pi 2}^{\frac\pi 2}|g_n(T,x)|^2dx\le C\int_{0}^{T}\int_{\omega_{a,b}}|g_n(t,x)|^{2}\cos xdxdt,
\end{equation}
then, we can easily show that the observability inequality \eqref{eq13} is verified. Indeed, thanks to \eqref{eq44}, \eqref{eq45} and \eqref{eq46}, we find
\begin{multline}\label{eq47}
\int_{\Omega}|g(T,x,y)|^2d\sigma =\sum\limits_{|n|\le\ell\in\NN}\int_{-\frac\pi 2}^{\frac\pi 2}|g_n(T,x)|^{2}\cos xdx\\
\le C\sum\limits_{|n|\le \ell\in\NN}\int_{0}^{T}\int_{\omega_{a,b}}|g_n(t,x)|^{2}\cos xdxdt
=C\int_{0}^{T}\int_{\omega_{a,b}}\int_{0}^{2\pi}|g(t,x,y)|^{2}d\sigma dt.
\end{multline}
This immediately yields \eqref{eq13}. Hence, in order to prove Theorem \ref{Theo2} it is necessary and sufficient to study the observability of system \eqref{eq23} uniformly with respect to $n\in\NN$.

\begin{Defi}(Uniform observability)\label{fourier.def2}
	Let $\omega_{a,b}$ be defined as in \eqref{eq::omega}. Then system \eqref{eq23} is observable in $\omega_{a,b}$ in time $T$ uniformly with respect to $n\in\NN$, if there exists $C>0$ such that, for every $n\in\NN$, and $g_{0,n}\in\cH_{n}$, the solution of \eqref{eq23} satisfies \eqref{eq46}.
\end{Defi}

\section{The $1D$ equations in the space $ L^2$ without weight}\label{s::weight}

In this section, we recast the $1D$ equation \eqref{eq23} in the space $ L^2(-\pi/2,\pi/2)$ without weight in the cases $n\in \NN\bs\{0\}$ and in the space $ L^2(-1,1)$ when $n=0$.

\subsection{The $1D$ equation in the space $ L^2(-1,1)$ and observability inequality when $n=0$}\label{ss.Well4}
Let us consider the unitary transformation 
\begin{eqnarray*}
	\operatorname{V}:& L^2((-\pi/2,\pi/2);\cos xdx)&\longrightarrow  L^2(-1,1)\\
	&v&\longmapsto (\operatorname{V}v)(x) = v(\arcsin x).
\end{eqnarray*} 
We define the unbounded operator $\operatorname{M_0}$ on the space $ L^2(-1,1)$ by
\begin{equation}\label{eq::well1}
\operatorname{M_0} = \operatorname{V}\cL_0\operatorname{V}^{*},\qquad\qquad\operatorname{D(M_0)} = \operatorname{V}\operatorname{D}(\cL_0).
\end{equation} 
Here, $\operatorname{V}^{*}$ is the adjoint of the unitary operator $\operatorname{V}$, that is,
\begin{eqnarray*}
	\operatorname{V}^{*}:& L^2(-1,1)&\longrightarrow  L^2((-\pi/2,\pi/2);\cos xdx)\\
	&w&\longmapsto (\operatorname{V}w)(x) = w(\sin x).
\end{eqnarray*} 

We then have the following expression of operator $\operatorname{M_0}$:
\begin{equation}\label{eq::well2}
\operatorname{M_0}w =((1-x^2)w')',\qquad\qquad\forall w\in \operatorname{D(M_0)}.
\end{equation} 

Since the differential operator $\partial_t$ commutes with the unitary transformation $\operatorname{V}$, one deduces easily that, when $n=0$, system \eqref{eq23} is equivalent to the following 
\begin{equation}\label{eq::well3}
\begin{cases}
\partial_t\tilde{g}_0-\operatorname{M_0}\tilde{g}_0 = 0,&\mbox{ a.e. in } (0,T)\times(-1,1),\cr
\tilde{g}_0(0,x) = \tilde{g}_{0,0}(x),&\;x\in (-1,1).
\end{cases}
\end{equation}
In particular, the solution $\tilde{g}_0 = \operatorname{V}g_0$ lies in the class (see Proposition \ref{fourier.prop2} and Remark \ref{fourier.rmq2})
\begin{equation}\label{eq::well4}
\cC([0,T]; L^2(-1,1))\cap\cC^{\infty}((0,T);\operatorname{D(M_{0})}).
\end{equation}

We characterise in the following some useful properties of functions belonging to the domain $\operatorname{D(M_{0})}$, that are obtained by Lemma~\ref{lem::fourier 1}.

\begin{Lem}\label{lem::well1}
	Let $w\in\operatorname{D(M_{0})}$. Then $w$  belongs to the Sobolev space $ H^1(-1,1)$ and $w$ is locally absolutely continuous on $[-1,1]$. Moreover, it holds
	\begin{equation}\label{eq::well5}
	w(\pm 1)\in\RR\qquad\mbox{and}\qquad w'(x)\sqrt{1-x^2}|_{x=\pm 1} = 0.
	\end{equation}
\end{Lem}
\begin{proof}
	Let $w\in\operatorname{D(M_0)}$. Then $w(x) = v(\arcsin x)$ for some $v\in\operatorname{D(\cL_0)}$ and $\operatorname{a.e.}$, $x\in(-1,1)$. Since $v(\pm\pi/2)\in\RR$, the first property in \eqref{eq::well5} immediately follows. Similarly, since $v \in\cH_0\cong L^2((-\pi/2,\pi/2);\cos xdx)$, and using \eqref{eq::second derivative}, it holds
	$$
	\int_{-1}^{1}|w(x)|^2dx = \int_{-\frac\pi 2}^{\frac\pi 2}|v(x)|^2\cos xdx<\infty\qquad\mbox{and}\qquad\int_{-1}^{1}|w'(x)|^2dx = \int_{-\frac\pi 2}^{\frac\pi 2}\frac{|v'(x)|^2}{\cos x}dx<\infty,
	$$
	so that, $w, w'\in L^2(-1,1)$. Finally, $w'(x)\sqrt{1-x^2}|_{x=\pm 1} = v'(\pm\pi/2) = 0$, by \eqref{eq33}.
\end{proof}
\begin{Rema}
	We note that an observability inequality for equation \eqref{eq::well3} was established in \cite{Martinez} by Martinez and Vancostenoble. Indeed thanks to Lemma \ref{lem::well1} we aim at proving an observability inequality for the following equation
	\begin{equation}\label{eq::well6}
	\begin{cases}
	\partial_t w-\dx(a(x)\dx w) = 0,&\mbox{ a.e. in } (0,T)\times(-1,1),\cr
	(a(x)\dx w)(t,\pm 1) = 0,&\;t\in(0,T),\cr
	w(0,x) = w_0(x),&\;x\in (-1,1),
	\end{cases}
	\end{equation}
	where $a(x):= 1-x^2$, $w_0\in L^2(-1,1)$ and the solution $w$ belongs to the class \eqref{eq::well4}. We observe that the weight function $a$ satisfies $0\le a\in\cC^2([-1,1]),\; a(\pm 1) = 0$, $a>0$ on $(-1,1)$, $\frac{1}{\sqrt{a}}\in L^1(-1,1)$  and
	$$
	\frac{(1+x)a'(x)}{a(x)}\xrightarrow[x\to-1^{+}]{}1\qquad\mbox{and}\qquad\frac{(1-x)a'(x)}{a(x)}\xrightarrow[x\to1^{-}]{}-1.
	$$
	So, we are in the framework of \cite{Martinez}.
\end{Rema}
Then \cite[Theorem 3.4]{Martinez} gives the following
\begin{Lem}
	Let $T>0$ and $a,b\in\RR$ be such that $0<a<b\le\frac\pi 2$. Let $\widetilde{\omega}_{a,b}:=(-\sin b,-\sin a)\cup (\sin a,\sin b)$. Then, there exists a positive constant $C_0>0$ such that every solution $w$ of system \eqref{eq::well6} satisfies
	\begin{equation}
	\int_{-1}^{1}|w(T,x)|^2dx\le C_0\int_{0}^{T}\int_{\widetilde{\omega}_{a,b}}|w(t,x)|^2dxdt.
	\end{equation}	
\end{Lem}
Finally, thanks to the above lemma and the fact that $w:=\operatorname{V}g_0$ is the solution of system \eqref{eq::well6}, we deduce the following observability inequality for equation \eqref{eq23} when $n=0$. 
\begin{Prop}
	Let $T>0$ and $\omega_{a,b}$ be defined as in \eqref{eq::omega}. Then, there exists a positive constant $C_0>0$ such that the first Fourier component $g_0$, which is the solution of equation \eqref{eq23} when $n=0$ satisfies
	\begin{equation}\label{eq::inequality 0}
	\int_{-\frac\pi 2}^{\frac\pi 2}|g_0(T,x)|^2\cos xdx\le C_0\int_{0}^{T}\int_{\omega_{a,b}}|g_0(t,x)|^2\cos xdxdt.
	\end{equation}
\end{Prop}
\begin{Rema}\label{rmk::usefule} We highlight that the result of \cite[Theorem 3.4]{Martinez} ensures that when $n=0$, system \eqref{eq23} is observable in any subset $\omega\subset\subset(-\pi/2,\pi/2)$ and in arbitrary time $T>0$.
\end{Rema}
\subsection{The $1D$ equations in the space $ L^2(-\pi/2,\pi/2)$ without weight in the cases $n\in\NN^{*}$}\label{ss.Well3}
In these cases, we consider the unitary transformation 
\begin{eqnarray*}
	\operatorname{U}:& L^2((-\pi/2,\pi/2);\cos xdx)&\longrightarrow  L^2(-\pi/2,\pi/2)\\
	&v&\longmapsto (\operatorname{U}v)(x) = \sqrt{\cos x}v(x).
\end{eqnarray*} 
We define for all $n\in\NN^{*}$ the unbounded operator $\operatorname{M_n}$ on the space $ L^2(-\pi/2,\pi/2)$ by
\begin{equation}\label{eq35}
\operatorname{M_n} = \operatorname{U}\cL_{n}\operatorname{U}^{*},\qquad\qquad\operatorname{\operatorname{D(M_n)}} = \operatorname{U}\operatorname{D}(\cL_{n}),
\end{equation} 
where, $\operatorname{U}^{*}$ is the adjoint of the unitary operator $\operatorname{U}$. So, we deduce the following expression of operator $\operatorname{M_n}$:
\begin{equation}\label{eq36}
\operatorname{M_n}w = w''-q_{n}(x)w,\hspace{1cm}\forall w\in \operatorname{D(M_n)},
\end{equation} 
where, for all $n\in\NN^{*}$, the potential $q_n$ is given by
\begin{equation}\label{eq37}
q_n(x) = (n^2-1/4)\tan^2x-1/2,\hspace{1cm}\forall x\in(-\pi/2,\pi/2).
\end{equation}
\begin{Rema}\label{rmq1}
	Let us emphasis that, since $\operatorname{U}$ is an unitary transformation, then the unbounded operator $(\operatorname{M_n},\operatorname{D(M_n)})$ defined on the space $ L^2(-\pi/2,\pi/2)$ inherits some properties of the operator $(\cL_{n},\operatorname{D}(\cL_{n}))$. That is, the operator $(-\operatorname{M_n},\operatorname{D(M_n)})$ is a densely defined, self-adjoint, and positive operator with compact resolvent on $L^2(-\pi/2,\pi/2)$ for all $n\in\NN^{*}$. 
	
\end{Rema}

Since the differential operator $\partial_t$ commutes with the unitary transformation $\operatorname{U}$, one deduces easily that system \eqref{eq23} is equivalent to the following 
\begin{equation}\label{eq39}
\begin{cases}
\partial_t\tilde{g}_n-\operatorname{M_n}\tilde{g}_n = 0,&\mbox{ a.e. in } (0,T)\times(-\pi/2,\pi/2),\cr
\tilde{g}_n(0,x) = \tilde{g}_{0,n}(x),&\;x\in (-\pi/2,\pi/2).
\end{cases}
\end{equation}
In particular, the solution $\tilde{g}_n = \operatorname{U}g_n$ lies in the class (see Proposition \ref{fourier.prop2} and Remark \ref{fourier.rmq2})
\begin{equation}\label{eq40}
\cC([0,T];L^2(-\pi/2,\pi/2))\cap\cC^{\infty}((0,T);\operatorname{D(M_{n})}).
\end{equation}
In the following, we collect some properties of the functions lying in the domain $\operatorname{D(M_n)}$ ($n\in\NN^{*}$) which will be useful in the proof of a global Carleman estimate for system \eqref{eq39} in Section \ref{s.Carleman}. 
\begin{Lem}\label{Lem::useful}
	Let $n\in\NN^{*}$ and $w\in\operatorname{D(M_n)}$. Then $w'$ belongs to $ L^2(-\pi/2,\pi/2) $ and $w$ is locally absolutely continuous on $\displaystyle[-\pi/2,\pi/2]$. Moreover,
		\begin{equation}\label{eq41}
	\lim\limits_{x\rightarrow-\frac{\pi}{2}^{+}}w(x)=\lim\limits_{x\rightarrow\frac{\pi}{2}^{-}}w(x)=0,\qquad\qquad\lim\limits_{x\rightarrow-\frac{\pi}{2}^{+}}w'(x)=\lim\limits_{x\rightarrow\frac{\pi}{2}^{-}}w'(x)=0.
	\end{equation}
\end{Lem}
\begin{proof}
	Let $n\in\NN^{*}$ and $w\in\operatorname{D(M_n)}$. Then $w = \sqrt{\cos x}v$ for some $v\in\operatorname{D(\cL_n)}$ and $\operatorname{a.e.}$, $x\in\displaystyle(-\pi/2,\pi/2)$. Since $v(\pm\pi/2)\in\RR$, the first identity in \eqref{eq41} immediately follows. Similarly, since $\tan xv\in\cH_n$ (see, \eqref{eq::A1}), it holds
	$$
	\|\tan xw\|_{ L^2(-\pi/2,\pi/2)}^2=\|\tan xv\|_{\cH_n}^2<\infty.
	$$
	By deriving $w$, we find that $w'+\tan xw/2 = \sqrt{\cos x}v'$ belongs to $ L^2(-\pi/2,\pi/2)$, due to \eqref{eq::A1}, and then $w'\in L^2(-\pi/2,\pi/2)$. Since $v'(\pm\pi/2)\in\RR$, it holds
	$$
	\lim\limits_{x\rightarrow-\frac{\pi}{2}^{+}}w'(x) = \lim\limits_{x\rightarrow-\frac{\pi}{2}^{+}}-\frac{1}{2}\frac{\sin x}{\sqrt{\cos x}}v(x)+\sqrt{\cos x}v'(x) = 0,
	$$
	by the second identity of \eqref{eq34}. The proof of the limit at $\pi/2$ is similar. It then follows that $w$ is locally absolutely continuous on $\displaystyle[-\pi/2,\pi/2]$. 
\end{proof}
\begin{Rema}\label{rmq::useful1}
	The above lemma also shows that for all $n\in\NN^{*}$, the domain $\operatorname{D(M_n)}$ is a subspace of the Sobolev space $\displaystyle H_0^1(-\pi/2,\pi/2)$. Therefore, Lemma~\ref{lem:Hardy-Poincaré} holds true in $\operatorname{D(M_n)}$.
\end{Rema}

\section{A global Carleman estimate in the cases $n\in\NN^{*}$}\label{s.Carleman}
The purpose of this section is to obtain a global Carleman estimate for systems \eqref{eq39} in the case $n\in\NN^{*}$. This will allow us, using the dissipation rate \eqref{eq32}, to prove the uniform observability inequality \eqref{eq46} in Section \ref{ss.proof_positive}. In what follows, we drop the tilde and the index $n$ to simplify the notations. 

\begin{Prop}\label{Carleman.pro}
	Let $\omega_{a,b}$ be defined as in \eqref{eq::omega}. Then there exist a weight function $\beta\in\cC^{4}([-\pi/2,\pi/2])$ and positive constants $\cR_0, \cR_1>0$ such that for every $T>0$, $n\in\NN^{*}$ and $s\ge\cR_0\max(T+T^2,T^2n)$, every $\displaystyle g\in \cC([0,T]; L^2(-\pi/2,\pi/2))\cap\cC^{2}((0,T);\operatorname{D(M_{n})})$ satisfies
	\begin{multline}\label{eq48}
	\cR_1\int_{0}^{T}\int_{-\frac\pi 2}^{\frac\pi 2}\left(\frac{s}{t(T-t)}|\dx g(t,x)|^2+\frac{s^3}{(t(T-t))^3}|g(t,x)|^2\right)e^{-\frac{2s\beta(x)}{t(T-t)}}dxdt \\
	\le\int_{0}^{T}\int_{\omega_{a,b}}\frac{s^3}{(t(T-t))^3}|g(t,x)|^2e^{-\frac{2s\beta(x)}{t(T-t)}}dxdt
	+\int_{0}^{T}\int_{-\frac\pi 2}^{\frac\pi 2}|\cP_ng(t,x)|^2e^{-\frac{2s\beta(x)}{t(T-t)}}dxdt.
	\end{multline}
	Here, $\cR_i:=\cR_i(\beta,a,b)$, $i=0,1$ and we let
	$$\cP_n:=\dt-\dx^2+q_n(x)\hspace{0.5cm}\mbox{with}\hspace{0.5cm}q_n(x) = (n^2-1/4)\tan^2x-1/2.$$
\end{Prop}

Before proving the above proposition, let us present some important remarks and comments which are essential to understand the proof.

As it is now well-understood, the main difficulty in the proof of Carleman estimates as \eqref{eq48} is to identify a suitable weight function $\beta$ which is able to deal with the specificity of the parabolic operator under consideration. 
For example, for the standard parabolic operator see the pioneer work by Imanuvilov \cite{Imanuvilov} or Fursikov and Imanuvilov \cite{Fursikov}; for the standard parabolic operator with interior quadratic singularities (resp.~boundary singularity) see the work by Ervedoza \cite{Ervedoza} (resp.~Cazacu \cite{Cazacu} or Biccari and Zuazua \cite{Biccari}); for the $2D$ parabolic Grushin operator, see the work by Beauchard and al \cite{Beau,Beau2,Beau4} and Koenig \cite{Koenig}; for $2D$ parabolic Grushin operator with internal (resp. boundary) singular potential see the work by Morancey \cite{Morancey} (resp. Cannarsa and Guglielmi \cite{Cannarsa_G}). We remark that in general, the function $\beta$ is chosen to be strictly monotone outside of the control region, and concave, so that the term in $s^3$ is the leading one. 
Particularly in the singular cases, this choice allows to get rid of the singular terms which can not be bounded at the singularity, usually by taking advantage of Hardy-Poincaré type inequalities.

In the case at hand, the potential $q_n$ is singular in $\pm\pi/2$. Thus, we shall apply the Hardy-Poincaré inequalities of Lemma~\ref{lem:Hardy-Poincaré} (see, Remark~\ref{rmq::useful1}) to get rid of the singular terms which can not be bounded at $\pm\pi/2$.
\begin{Rema}\label{Carleman.rmq1}
	The proof of Proposition \ref{Carleman.pro} will be split into several lemmas using the classical strategy \cite{Fursikov} by Fursikov and Imanuvilov (we refer to \cite[p.79]{Coron} for a pedagogical presentation). Let us emphasize that functions in $\operatorname{D(M_{n})}$ have the regularity in the space variable that we need in order to apply integrations by parts (see, $\operatorname{e.g.}$ Lemma \ref{Lem::useful}).
\end{Rema}
\begin{Nota}\label{Carleman.nota1}
	Let us introduce the general notations which will be used in what follows. We let $a'$ and $b'$ are real numbers such that
	\begin{equation}\label{eq49}
	0<a<a'<b'<b\le\pi/2\qquad\mbox{and}\qquad [a',b']\subset(a,b).
	\end{equation}
	We consider the subdomains
	\begin{equation}\label{eq50}
	\operatorname{\omega}_{con}:=(-b',-a')\cup(a',b'),\qquad\operatorname{\omega}_{deg}:=(-a',a'),\qquad\operatorname{\omega}_{bdy}:=\left(-\pi/2,-b'\right)\cup\left(b',\pi/2\right),
	\end{equation}
	so that $$(-\pi/2,\pi/2) = \operatorname{\omega}_{bdy}\cup\operatorname{\omega}_{deg}\cup\operatorname{\omega}_{con}\qquad\qquad\mbox{and}\qquad\qquad \operatorname{\omega}_{con}\subset\omega_{a,b}.$$
	We introduce also the weight function
	\begin{equation}\label{eq51}
	\varphi(t,x) = s\theta(t)\beta(x),\qquad\qquad(t,x)\in Q:=(0,T)\times I,\qquad I:=(-\pi/2,\pi/2),
	\end{equation}
	where the positive constant $s = s(T,n,\beta)>0$ will be chosen later on and the temporal weight $\theta$ is given by
	\begin{equation}\label{eq52}
	\theta(t) = \frac{1}{t(T-t)},\qquad\qquad t\in(0,T).
	\end{equation}
	We end this part of notations introducing for all $n\in\NN^{*}$ and every $g\in \cC([0,T]; L^2(-\pi/2,\pi/2))\cap\cC^{2}((0,T);\operatorname{D(M_{n})})$,  the change of function
	\begin{equation}\label{eq53}
	z(t,x) = g(t,x)e^{-\varphi(t,x)},\hspace{1cm}(t,x)\in Q.
	\end{equation}
\end{Nota}
In the following lemma we design the weight function $\beta$.
\begin{Lem}\label{Carleman.lem1}
	The function $\beta\in\cC^{4}([-\pi/2,\pi/2])$ satisfies
	\begin{equation}\label{eq54}
	\beta\ge 1,\qquad\qquad\mbox{on}\qquad(-\pi/2,\pi/2),
	\end{equation}
	\begin{equation}\label{eq55}
	\beta(x)=	\begin{cases}
	\log|\sin x|+A_1|x|+A_2&\mbox{if}\qquad x\in\overline{\operatorname{\omega}_{bdy}},\cr
	\log\cos x-\frac{x^2}{2}+A_3(x+1)&\mbox{if}\qquad x\in\overline{\operatorname{\omega}_{deg}},
	\end{cases}
	\end{equation}
	where the positive constants $A_i,\;1\le i\le 3$ are such that \eqref{eq54} is verified and
	\begin{equation}\label{eq57}
	\begin{cases}
	|\beta'(x)|\ge\eta_1,& x\in\overline{\operatorname{\omega}_{bdy}},\cr
	\beta'(x)\ge\eta_2,& x\in\overline{\operatorname{\omega}_{deg}},
	\end{cases}
	\end{equation}
	for some positive constants $\eta_1,\eta_2>0$.
\end{Lem}
\begin{figure}[h!]
		\centering
		\includegraphics[width=0.45\textwidth]{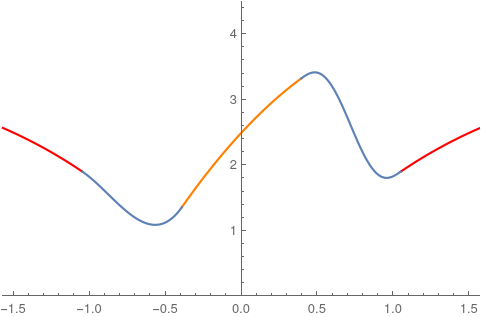}
		\caption{ The spatial weight function $\beta$. The subcontrol region $\omega_{a',b'}$ is in blue.}
		\label{fig2}
\end{figure}
\begin{Rema}\label{Carleman.rmq2}
	We stress that the explicit expression of the weight $\beta$ is only needed near $\pm\pi/2$, in order to get rid of the singular terms which can not be bounded at $\pm\pi/2$. Apart from this, assuming that $\beta$ is strictly monotonous and concave outside the subcontrol region $\operatorname{\omega}_{con}$ suffices.
\end{Rema}
The following lemma gives some useful properties of the temporal weight $\theta$ which are obtained by direct computations.
\begin{Lem}\label{Carleman.lem2}
	Let the temporal weight $\theta$ be given by \eqref{eq52}. Then we have for all $t\in (0,T)$,
	$$\theta'(t) = (2t-T)\theta^2(t),\qquad\theta''(t) = 2\theta^2(t)(1+(2t-T)^2\theta(t)),$$
	and the following inequalities hold
	$$\theta(t)\le 2^{-4}T^{4}\theta^{3}(t),\qquad|\theta'(t)|\le 2^{-2}T^{3}\theta^3(t),\qquad|\theta(t)\theta'(t)|\le T\theta^3(t),\qquad|\theta''(t)|\le\frac{5}{2}T^{2}\theta^3(t).$$
	Moreover, one has
	$$\lim\limits_{t\rightarrow 0^{+}}\theta(t)=\lim\limits_{t\rightarrow T^{-}}\theta(t)=+\infty.$$
\end{Lem}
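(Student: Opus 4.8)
The plan is to treat all the assertions by direct elementary computation, since they are explicit statements about the function $\theta(t)=1/(t(T-t))$ on $(0,T)$. First I would compute the two derivatives. Writing $\theta=(tT-t^2)^{-1}$ and differentiating gives $\theta'=-(T-2t)(tT-t^2)^{-2}=(2t-T)\theta^2$, which is the first identity. Differentiating once more by the product rule and substituting $\theta'=(2t-T)\theta^2$ yields
\[
\theta''=2\theta^2+(2t-T)\cdot 2\theta\theta'=2\theta^2+2(2t-T)^2\theta^3=2\theta^2\bigl(1+(2t-T)^2\theta\bigr),
\]
the second identity.

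All four inequalities will then follow from two elementary bounds valid on $(0,T)$: since $t(T-t)$ attains its maximum $T^2/4$ at $t=T/2$, one has $\theta\ge 4/T^2$, equivalently $1\le\frac{T^2}{4}\theta$; and $|2t-T|\le T$. Indeed, the bound $\theta\le 2^{-4}T^4\theta^3$ is equivalent to $1\le\frac{T^4}{16}\theta^2$, obtained by squaring $1\le\frac{T^2}{4}\theta$. Next, $|\theta'|=|2t-T|\theta^2\le T\theta^2\le T\cdot\frac{T^2}{4}\theta\cdot\theta^2=2^{-2}T^3\theta^3$, while $|\theta\theta'|=|2t-T|\theta^3\le T\theta^3$ directly. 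Finally, since $\theta''\ge 0$, I would bound $(2t-T)^2\le T^2$ together with $1\le\frac{T^2}{4}\theta$ to get $1+(2t-T)^2\theta\le\bigl(\frac{T^2}{4}+T^2\bigr)\theta=\frac{5T^2}{4}\theta$, whence $|\theta''|=2\theta^2\bigl(1+(2t-T)^2\theta\bigr)\le\frac{5}{2}T^2\theta^3$.

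For the endpoint behavior, I would simply note that $t(T-t)\to 0$ as $t\to 0^+$ and as $t\to T^-$; equivalently, the reciprocal $1/\theta$ vanishes at both endpoints, which is exactly what forces the boundary terms carrying the factor $e^{-2\varphi}$ to disappear in the integrations by parts of subsection~\ref{ss.Carleman}. There is no genuine obstacle in this lemma: the only point requiring minor care is the constant bookkeeping, in particular applying the single inequality $\theta\ge 4/T^2$ as many times as needed to convert each polynomial-in-$T$ prefactor into an extra power of $\theta$.
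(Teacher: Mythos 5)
Your computation is correct and is exactly the paper's (unwritten) argument: the paper offers no proof beyond the remark that the identities and bounds ``are obtained by direct computations,'' and your derivation of $\theta'=(2t-T)\theta^2$, $\theta''=2\theta^2(1+(2t-T)^2\theta)$, and the four bounds via $1\le\frac{T^2}{4}\theta$ and $|2t-T|\le T$ is the intended route. One point worth making explicit: the final assertion of the lemma is false as literally stated, since $\theta(t)=1/(t(T-t))\to+\infty$ at both endpoints; what you prove --- that $1/\theta=t(T-t)\to 0$, hence $e^{-2\varphi}=e^{-2s\theta\beta}\to 0$ because $s>0$ and $\beta\ge 1$ --- is the correct statement and the one actually used to kill the boundary terms in Lemma \ref{Carleman.lem3}, so you have silently repaired a typo rather than left a gap.
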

In the following lemma, we give some useful properties of the function $z$ introduced in \eqref{eq53} which are obtained by direct computations  applying Lemmas \ref{Lem::useful} and \ref{Carleman.lem2}
\begin{Lem}\label{Carleman.lem3}
	Let $n\in\NN^{*}$, then the function $z$ introduced in \eqref{eq53} belongs at least in the class $\cC([0,T]; L^2(-\pi/2,\pi/2))\cap\cC^{2}((0,T);\operatorname{D(M_{n})})$ and satisfies
	\begin{equation}\label{eq58}
	\begin{cases}
	z(0,x) = z(T,x) = \partial_{x}z(0,x) = \partial_{x}z(T,x) = 0,&x\in [-\pi/2,\pi/2],\cr
	z(t,\pm\pi/2) = \partial_{t}z(t,\pm\pi/2) = \dx z(t,\pm\pi/2) = 0,&t\in(0,T).
	\end{cases}
	\end{equation}
	Moreover, one has
	\begin{equation}\label{eq59}
	\cP_n^{+}z+\cP_n^{-}z = e^{-\varphi}\cP_ng,
	\end{equation}
	where $\cP_n$ is the parabolic operator introduced in Proposition \ref{Carleman.pro}, and we let
	\begin{equation}\label{eq60}
	\cP_n^{+}z = -\operatorname{M_n}z+(\partial_{t}\varphi-|\dx\varphi|^2)z\qquad\mbox{and}\qquad\cP_n^{-}z = \partial_{t}z-2\dx z\dx\varphi-(\dx^2\varphi)z.
	\end{equation}
\end{Lem}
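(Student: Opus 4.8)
The plan is to treat the three assertions separately --- the factorization (\ref{eq59}), the boundary and temporal conditions (\ref{eq58}), and the regularity class of $z$ --- the first being a purely algebraic identity and the other two being inherited from $g$ and from the Carleman weight. I would begin with (\ref{eq59}), which needs no estimate. Writing $g = z e^{\varphi}$ with $\varphi = s\theta\beta$, a direct expansion gives $\partial_t g = (\partial_t z + z\,\partial_t\varphi)e^{\varphi}$ and $\partial_x^2 g = (\partial_x^2 z + 2\,\partial_x z\,\partial_x\varphi + z\,|\partial_x\varphi|^2 + z\,\partial_x^2\varphi)e^{\varphi}$. Substituting into $\cP_n g = \partial_t g - \partial_x^2 g + q_n g$ and multiplying by $e^{-\varphi}$ yields
$$ e^{-\varphi}\cP_n g = \partial_t z - \partial_x^2 z + q_n z + (\partial_t\varphi)z - |\partial_x\varphi|^2 z - (\partial_x^2\varphi)z - 2\,\partial_x z\,\partial_x\varphi. $$
Grouping the right-hand side into the formally self-adjoint part $\cP_n^{+}z$ and the formally skew-adjoint part $\cP_n^{-}z$ exactly as in (\ref{eq60}) gives $\cP_n^{+}z + \cP_n^{-}z = e^{-\varphi}\cP_n g$, which is (\ref{eq59}).

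Next I would establish the conditions at $x = \pm\pi/2$ in the second line of (\ref{eq58}). For each $t \in (0,T)$ one has $g(t,\cdot) \in D(M_n)$, so (\ref{eq38}) gives $g(t,\pm\pi/2) = 0$ and Lemma \ref{Fourier.lem2} gives $\partial_x g(t,\pm\pi/2) = 0$; since $g \in \cC^2((0,T);D(M_n))$, the function $\partial_t g(t,\cdot)$ also belongs to $D(M_n)$, so $\partial_t g(t,\pm\pi/2) = 0$ as well. Because $\beta \in \cC^{4}([-\pi/2,\pi/2])$, the weight $\varphi(t,\cdot)$ and all its derivatives remain finite at $x = \pm\pi/2$; thus, from $\partial_x z = (\partial_x g - g\,\partial_x\varphi)e^{-\varphi}$ and $\partial_t z = (\partial_t g - g\,\partial_t\varphi)e^{-\varphi}$, evaluating at $x = \pm\pi/2$ and inserting the three vanishings above yields $z(t,\pm\pi/2) = \partial_x z(t,\pm\pi/2) = \partial_t z(t,\pm\pi/2) = 0$.

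Finally, the conditions at $t = 0, T$ and the membership of $z$ in the stated class rely on the behaviour of the weight. Since $\beta \ge 1$ and, by Lemma \ref{Carleman.lem2}, $\theta(t) \to +\infty$ as $t \to 0^{+}$ and $t \to T^{-}$, the factor $e^{-\varphi} = e^{-s\theta\beta}$ decays there faster than any power of $\theta$. Each $t$- or $x$-derivative of $z$ produces only factors polynomial in $\theta$ (through the derivatives of $\varphi$) multiplying $g$ and its derivatives, whose $D(M_n)$-norms grow at most polynomially in $1/t$ near the endpoints by the parabolic smoothing recalled in Remark \ref{fourier.rmq2}; the super-polynomial decay of $e^{-\varphi}$ therefore forces $z$, $\partial_x z$ and all derivatives that appear to tend to $0$ as $t \to 0^{+}, T^{-}$, which is the first line of (\ref{eq58}), and, combined with the smoothness of $\varphi$ on every compact subinterval of $(0,T)$, places $z$ in $\cC([0,T];L^{2}(-\pi/2,\pi/2)) \cap \cC^{2}((0,T);D(M_n))$. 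I expect this reconciliation of the exponential decay of the weight with the endpoint growth of $g$ to be the only genuinely technical point, the factorization being algebraic and the lateral conditions following at once from the structure of $D(M_n)$.
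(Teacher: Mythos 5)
Your proof is correct and follows exactly the route the paper intends --- the paper gives no written proof, stating only that the lemma is ``obtained by direct computations applying (\ref{eq38}) and Lemmas \ref{Fourier.lem2} and \ref{Carleman.lem2}'', and your three steps (the algebraic splitting into $\cP_n^{+}$ and $\cP_n^{-}$, the lateral conditions inherited from $D(M_n)$, and the endpoint conditions from the decay of $e^{-\varphi}$ against the at-most-polynomial growth of $g$) are precisely that computation made explicit. One remark: Lemma \ref{Carleman.lem2} as printed asserts $\theta(t)\to 0$ as $t\to 0^{+},T^{-}$, which is an evident typo for the fact you correctly invoke, namely $\theta(t)=1/(t(T-t))\to+\infty$, so that $e^{-s\theta\beta}\to 0$ faster than any power of $\theta$.
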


Let $Q=(0,T)\times (-\pi/2,\pi/2)$ and $dQ = dxdt$. Observe first that, $\cP_n^{+}z$ and $\cP_n^{-}z$ belong to $L^2(Q)$ by the definition of $\operatorname{D(M_{n})}$ and Lemma \ref{Lem::useful}. So, developing the $L^2(Q)$ squared norm in identity \eqref{eq59},  leads to
\begin{equation}\label{eq61}
\int_{Q}\cP_n^{+}z\cP_n^{-}zdQ\le\frac{1}{2}\int_{Q}\left|e^{-\varphi}\cP_ng\right|^{2}dQ.
\end{equation}

In the following we compute the scalar product in the left hand side of \eqref{eq61} using integration by parts and Fubini's Theorem.
\begin{Lem}\label{Carleman.lem4}
	Let $n\in\NN^{*}$, then we have
	\begin{multline}\label{eq62}
	\int_{Q}\cP_n^{+}z\cP_n^{-}zdQ = -2\int_{Q}\dx^2\varphi|\dx z|^2dQ+\frac{1}{2}\int_{Q}\dx^4\varphi|z|^2dQ+\int_{Q}\dx\varphi q_n'(x)|z|^2dQ\\
	-\frac{1}{2}\int_{Q}(\partial_{t}^2\varphi-2\dx\varphi\partial_{tx}\varphi)|z|^2dQ+\int_{Q}\dx\varphi\dx(\partial_{t}\varphi-|\dx\varphi|^2)|z|^2dQ.
	\end{multline}
\end{Lem}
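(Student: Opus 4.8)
The plan is to expand the integrand $\cP_n^{+}z\,\cP_n^{-}z$ into the nine products arising from the three summands of $\cP_n^{+}z=-\dx^2 z+q_n z+(\dt\varphi-|\dx\varphi|^2)z$ and the three summands of $\cP_n^{-}z=\dt z-2\dx z\,\dx\varphi-(\dx^2\varphi)z$, to integrate each product over $Q$, and to reduce every one to a multiple of $|\dx z|^2$ or $|z|^2$ by repeated integration by parts in $x$ and in $t$. Throughout I would use Fubini's theorem to separate the two integrations, and discard all boundary contributions by invoking the vanishing conditions \eqref{eq58}.

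First I would treat the three products involving $\dt z$. Pairing $\dt z$ with $-\dx^2 z$ (after one spatial integration by parts) and with $q_n z$ produces full time derivatives of the form $\tfrac12\dt|\dx z|^2$ and $\tfrac12 q_n\dt|z|^2$, whose time integrals vanish because $z(0,\cdot)=z(T,\cdot)=\dx z(0,\cdot)=\dx z(T,\cdot)=0$. Pairing $\dt z$ with $(\dt\varphi-|\dx\varphi|^2)z$ gives $\tfrac12(\dt\varphi-|\dx\varphi|^2)\dt|z|^2$, and integrating by parts in $t$ yields the term $-\tfrac12\int_Q(\partial_t^2\varphi-2\dx\varphi\,\partial_{tx}\varphi)|z|^2\,dQ$.

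Next I would handle the six remaining products by writing $2z\,\dx z=\dx(z^2)$ and $2\dx z\,\dx^2 z=\dx(|\dx z|^2)$ and integrating by parts in $x$. The products coming from $-\dx^2 z$ assemble, after integrating by parts twice so as to transfer all derivatives onto $\varphi$, into $-2\int_Q\dx^2\varphi|\dx z|^2\,dQ+\tfrac12\int_Q\dx^4\varphi|z|^2\,dQ$. The products coming from $q_n z$ combine into $\int_Q(q_n'\dx\varphi+q_n\dx^2\varphi)|z|^2\,dQ-\int_Q q_n\dx^2\varphi|z|^2\,dQ$, in which the two terms in $q_n\dx^2\varphi$ cancel and leave $\int_Q\dx\varphi\,q_n'|z|^2\,dQ$. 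By the identical mechanism, the products coming from $(\dt\varphi-|\dx\varphi|^2)z$ reduce, after the terms in $(\dt\varphi-|\dx\varphi|^2)\dx^2\varphi$ cancel, to $\int_Q\dx\varphi\,\dx(\dt\varphi-|\dx\varphi|^2)|z|^2\,dQ$. Summing the five surviving contributions gives exactly \eqref{eq62}.

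The main obstacle is to verify that every boundary term generated at $x=\pm\tfrac\pi2$ genuinely vanishes, since there both the potential $q_n\sim\tan^2x$ and the operator are singular. The boundary terms produced above are of the form $\dx\varphi|\dx z|^2$, $\dx^2\varphi\,z\,\dx z$, $\dx^3\varphi\,|z|^2$, $q_n\dx\varphi|z|^2$ and $(\dt\varphi-|\dx\varphi|^2)\dx\varphi|z|^2$ evaluated at $\pm\tfrac\pi2$, together with the term $\dx z\,\dt z$ from the first pairing. Since the weight $\beta$ is smooth near $\pm\tfrac\pi2$ (its logarithmic singularity sits at the equator $x=0$, not at the poles), all derivatives of $\varphi$ appearing here are bounded near $\pm\tfrac\pi2$, so the only genuinely singular factor is $q_n\sim(\tfrac\pi2-|x|)^{-2}$. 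Combining $z(\pm\tfrac\pi2)=\dx z(\pm\tfrac\pi2)=0$ gives $|z(x)|=o(\tfrac\pi2-|x|)$, whence $q_n|z|^2\to0$ and that boundary term vanishes; this also guarantees the absolute convergence of the volume integral containing $q_n\dx^2\varphi|z|^2$. The remaining boundary terms vanish directly from $z=\dx z=0$ at $\pm\tfrac\pi2$ and $\dx z=0$ at $t=0,T$. All of these decay facts are precisely what is recorded in \eqref{eq58}, as justified by \eqref{eq38}, Lemma~\ref{Fourier.lem2} and Lemma~\ref{Carleman.lem2}, so the identity follows.
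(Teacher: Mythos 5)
Your proposal is correct and follows essentially the same route as the paper: expand the product into the nine cross terms, integrate by parts in $x$ and $t$ using the vanishing conditions \eqref{eq58}, and observe the cancellation of the $q_n\dx^2\varphi$ and $(\dt\varphi-|\dx\varphi|^2)\dx^2\varphi$ contributions. The only cosmetic difference is in justifying that the singular boundary term $q_n\dx\varphi|z|^2$ vanishes at $\pm\pi/2$ (you use $|z|=o(\pi/2-|x|)$ from $z=\dx z=0$, while the paper rewrites $q_n|z|^2=z(M_nz+\dx^2z)$), and that the temporal boundary term for $(\dt\varphi-|\dx\varphi|^2)|z|^2$ requires the exponential decay of $e^{-\varphi}$ against the blow-up of $\theta$, which the paper makes explicit.
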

\begin{proof}
	Let $n\in\NN^{*}$ and $z$ be defined in \eqref{eq53}.
	We compute the six terms of the left hand side of \eqref{eq62} using integration by parts and Fubini's Theorem.
	
	1. The three terms involving operator $-\operatorname{M_n}z$ are
	$$-\int_{Q}\operatorname{M_n}z\dt zdQ+2\int_{Q}\operatorname{M_n}z\dx z\dx\varphi dQ+\int_{Q}\operatorname{M_n}z\dx^2\varphi zdQ.$$
	So, one has using \eqref{eq58},
	\begin{eqnarray}\label{eq63}
	-\int_{Q}\operatorname{M_n}z\partial_{t}zdQ &=& \int_{0}^{T}\left\{\left[-\dx z\dx(\dt z)\right]_{-\frac{\pi}{2}}^{\frac{\pi}{2}}+\int_{-\frac\pi 2}^{\frac\pi 2}\frac{1}{2}\dt(|\dx z|^2)dx\right\}dt+ \int_{-\frac\pi 2}^{\frac\pi 2}\frac{q_n(x)}{2}\left[|z|^2\right]_{0}^{T}dx\nonumber\\
	&=&\frac{1}{2}\int_{-\frac\pi 2}^{\frac\pi 2}\left\{|\partial_{x}z(T,x)|^{2}-|\partial_{x}z(0,x)|^{2}\right\}dx=0.
	\end{eqnarray}
	\begin{eqnarray}\label{eq64}
	2\int_{Q}\operatorname{M_n}z\dx z\dx\varphi dQ &=& -\int_{Q}\dx^2\varphi|\dx z|^2dQ+\int_{Q}\dx(q_n(x)\dx\varphi)|z|^2dQ\nonumber\\
	&&+\int_{0}^{T}\left\{[|\dx z|^2\dx\varphi]_{-\frac{\pi}{2}}^{\frac{\pi}{2}}-[q_n(x)\dx\varphi|z|^2]_{-\frac{\pi}{2}}^{\frac{\pi}{2}}\right\}dt\nonumber\\
	&=&-\int_{Q}\dx^2\varphi|\dx z|^2dQ+\int_{Q}\dx(q_n(x)\dx\varphi)|z|^2dQ.
	\end{eqnarray}
	Observe that boundary terms $\displaystyle[|\dx z|^2\dx\varphi]_{-\frac{\pi}{2}}^{\frac{\pi}{2}} = 0$ and $\displaystyle[q_n(x)\dx\varphi|z|^2]_{-\frac{\pi}{2}}^{\frac{\pi}{2}}=0$ due to \eqref{eq41} and \eqref{eq34}. Remember that $z(t,\cdot)\in \operatorname{D(M_{n})}$ implies $z(t,\cdot) = \sqrt{\cos x}v(t,\cdot)$ for some $v(t,\cdot)\in \operatorname{D}(\cL_{n})$.
	
	The third term is
	\begin{eqnarray}\label{eq65}
	\int_{Q}\operatorname{M_n}z\dx^2\varphi zdQ&=&-\int_{Q}\dx^2\varphi|\dx z|^2dQ+\frac{1}{2}\int_{Q}\dx^4\varphi|z|^2dQ-\int_{Q}q_n(x)\dx^2\varphi|z|^2dQ.
	\end{eqnarray}

	2.  The three terms involving $\left(\partial_{t}\varphi-|\dx\varphi|^2\right)z$ are
	$$\int_{Q}(\partial_{t}\varphi-|\dx\varphi|^2)z\partial_{t}zdQ-2\int_{Q}(\partial_{t}\varphi-|\dx\varphi|^2)z\dx z\dx\varphi dQ-\int_{Q}(\partial_{t}\varphi-|\dx\varphi|^2)\dx^2\varphi|z|^2dQ.$$
	So one has,
	\begin{equation}\label{eq69}
	\int_{Q}(\partial_{t}\varphi-|\dx\varphi|^2)z\partial_{t}zdQ=-\frac{1}{2}\int_{Q}\partial_{t}(\partial_{t}\varphi-|\dx\varphi|^2)|z|^2dQ,
	\end{equation}
	Here, the boundary term vanish as $t\rightarrow 0^{+},T^{-}$, because owing to \eqref{eq53}, \eqref{eq54} and Lemma \ref{Carleman.lem2} one has
	$$|(\partial_{t}\varphi-|\dx\varphi|^2)|z|^2|\le \theta^2e^{-s\theta}|s|2t-T|\beta+(s\theta\beta')^2||g|^2,$$
	and the right hand side tends to zero as $t\rightarrow 0^{+},T^{-}$ for every $x\in[-\pi/2,\pi/2]$. Using \eqref{eq58} one get
	\begin{equation}\label{eq70}
	-2\int_{Q}(\partial_{t}\varphi-|\dx\varphi|^2)z\dx z\dx\varphi dQ=\int_{Q}\dx[(\partial_{t}\varphi-|\dx\varphi|^2)\dx\varphi]|z|^2dQ.
	\end{equation}
	Finally, the third term is just
	\begin{equation}\label{eq71}
	\hspace{-5cm}-\int_{Q}(\partial_{t}\varphi-|\dx\varphi|^2)\dx^2\varphi|z|^2dQ.
	\end{equation}
	
	By combining \eqref{eq63}-\eqref{eq71}, we complete the proof of the lemma.
\end{proof}
We are going now to bound from below the right hand side of \eqref{eq62}. Since $Q:=(0,T)\times(-\pi/2,\pi/2)=(0,T)\times(\operatorname{\omega}_{bdy}\cup\operatorname{\omega}_{con}\cup\operatorname{\omega}_{deg})$, we separate the integrals of the right hand side over $(0,T)\times J$, where $J\in\{\operatorname{\omega}_{bdy},\operatorname{\omega}_{con},\operatorname{\omega}_{deg}\}$. Using Lemma \ref{Carleman.lem1} we immediately get
\begin{Lem}\label{Carleman.lem5}
	Let $n\in\NN^{*}$ and assume \eqref{eq62}. Then one has
	\begin{equation}\label{eq72}
	\int_{Q}\cP_n^{+}z\cP_n^{-}zdQ = \int_{0}^{T}\int_{\operatorname{\omega}_{bdy}}\operatorname{K_{bdy}}dQ+\int_{0}^{T}\int_{\operatorname{\omega}_{con}}\operatorname{K_{con}}dQ+\int_{0}^{T}\int_{\operatorname{\omega}_{deg}}\operatorname{K_{deg}}dQ,
	\end{equation}
	where
	\begin{eqnarray}\label{eq73}
	\operatorname{K_{deg}}&=&s\theta\left\{\left(\frac{2}{\cos^2x}+2\right)|\dx z|^2+\left(\frac{\sin^2x}{2\cos^4x}+\frac{x\sin x}{2\cos^3x}\right)|z|^2\right\}\nonumber\\
	&&+ \left\{\frac{2n^2s\theta}{\cos^2x}+2s^3\theta^3\left(\frac{1}{\cos^2x}+1\right)(-\tan x-x+A_3)^2\right\}|z|^2\nonumber\\
	&&+s\theta\left\{ A_3\frac{(2n^2-1/2)}{\cos^4x}+2s\theta'(-\tan x-x+A_3)^2- 2n^2\frac{x\tan x}{\cos^2x}\right\}|z|^2\nonumber\\
	&&-\frac{s\theta''}{2}(\log\cos x-\frac{x^2}{2}+A_3(x+1))|z|^2,
	\end{eqnarray}
	
	\begin{eqnarray}\label{eq74}
	\operatorname{K_{bdy}}&=&\frac{2s\theta}{\sin^2x}\left\{|\dx z|^2+|z|^2\right\}+\frac{2s^3\theta^3}{\sin^2x}\left(\frac{\cos x}{\sin x}+A_1\operatorname{sign}(x)\right)^2|z|^2\nonumber\\
	&&+\left\{2s^2\theta\theta'\left(\frac{\cos x}{\sin x}+A_1\operatorname{sign}(x)\right)^2-\frac{s\theta''}{2}(\log|\sin x|+A_1|x|+A_2)\right\}|z|^2\nonumber\\
	&&+\left\{-\frac{3s\theta}{\sin^4x}+\frac{s\theta(2n^2-1/2)}{\cos^2x}(1+A_1\operatorname{sign}(x)\tan x)\right\}|z|^2,
	\end{eqnarray}
	and
	\begin{equation}\label{eq76}
	\operatorname{K_{con}}=s\left\{\frac{\theta\beta^{(4)}}{2}+\theta\beta'\left(2n^2-\frac{1}{2}\right)\frac{\sin x}{\cos^3x}-\frac {\theta''\beta}2+2s\theta\beta'^2(\theta'-s\theta^2\beta'')\right\}|z|^2-2s\theta\beta''|\dx z|^2.
	\end{equation}
\end{Lem}
In the following lemmas, we bound from below \eqref{eq73} and \eqref{eq74} by positive terms.
\begin{Lem}\label{Carleman.lem6}
	Let $n\in\NN^{*}$ and $\operatorname{K_{deg}}$ be given by \eqref{eq73}. Then there exists a positive constant $s_1>0$ such that, for all
	\begin{equation}\label{eq77}
	s\ge s_1\max(T+T^2,T^2n),
	\end{equation}
	the following inequality holds
	\begin{equation}\label{eq78}
	\int_{0}^{T}\int_{\operatorname{\omega}_{deg}}\operatorname{K_{deg}}dQ\ge \int_{0}^{T}\int_{\operatorname{\omega}_{deg}}4s\theta|\dx z|^2+\eta_2^2s^3\theta^3|z|^2dQ,
	\end{equation}
	with $\eta_2$ as in \eqref{eq57}.
\end{Lem}
\begin{proof}
	Let $n\in\NN^{*}$ and $\operatorname{K_{deg}}$ be given by \eqref{eq73}. Since $x\sin x\ge 0$ for all $x\in[-\pi,\pi]$, we obtain using \eqref{eq57} that
	\begin{eqnarray}\label{eq79}
	\operatorname{K_{deg}}&\ge&4s\theta|\dx z|^2+4\eta_2^2s^3\theta^3|z|^2-\frac{s\theta''}{2}(\log\cos x-\frac{x^2}{2}+A_3(x+1))|z|^2\nonumber\\
	&&+\left\{2s^2\theta\theta'(-\tan x-x+A_3)^2-2s\theta n^2\frac{x\tan x}{\cos^2x}\right\}|z|^2.
	\end{eqnarray}
	For all $x\in\operatorname{\omega}_{deg}$ we have owing to Lemma \ref{Carleman.lem2}
	$$\left|2s^2\theta\theta'(-\tan x-x+A_3)^2\right|\le2C_1s^2T\theta^3,\qquad\left|-\frac{s\theta''}{2}(\log\cos x-\frac{x^2}{2}+A_3(x+1))\right|\le\frac{5C_2}{4}sT^2\theta^3,$$
	where $C_1=C_1(a'):=|\tan a'+a'+A_3|^2$ and $C_2=C_2(a'):=|\log\cos a'-a'^2/2+A_3(a'+1)|.$
	It follows that, if $
	s\ge \max\left(\frac{2C_1}{\eta_2^2},\frac{\sqrt{5C_2}}{2\eta_2}\right)T,$
	then \eqref{eq79} yields to
	\begin{equation}\label{eq81}
	\operatorname{K_{deg}}\ge4s\theta|\dx z|^2+2\eta_2^2s^3\theta^3|z|^2-2s\theta n^2\frac{x\tan x}{\cos^2x}|z|^2.
	\end{equation}

	Due to Lemma \ref{Carleman.lem2}, one has for all $x\in\operatorname{\omega}_{deg}$,
	$$\left|-2s\theta n^2\frac{x\tan x}{\cos^2x}\right|\le2n^2C_4s2^{-4}T^4\theta^3,$$
	where $C_4=C_4(a'):= a'\tan a'/\cos^2a'$.
	So, if
	\begin{equation}\label{eq86}
	s\ge\max\left(\frac{2C_1}{\eta_2^2},\frac{\sqrt{5C_2}}{2\eta_2},\frac{1}{\eta_2}\sqrt{\frac{C_4}{8}}\right)\max(T,T^2n),
	\end{equation}
	then
	\begin{equation}\label{eq87}
	\operatorname{K_{deg}}\ge4s\theta|\dx z|^2+\eta_2^2s^3\theta^3|z|^2.
	\end{equation}
	Therefore, owing to \eqref{eq86}-\eqref{eq87}, we deduce that, for all $n\in\NN^{*}$, if $s\ge s_1\max(T+T^2,T^2n)$,
	with
	\begin{equation}\label{eq88}
	s_1=s_1(a'):= \max\left(\frac{2C_1}{\eta_2^2},\frac{\sqrt{5C_2}}{2\eta_2},\frac{1}{\eta_2}\sqrt{\frac{A_5C_3}{32}},\frac{1}{\eta_2}\sqrt{\frac{C_4}{8}}\right),
	\end{equation}
	then
	\begin{equation}\label{eq89}
	\operatorname{K_{deg}}\ge 4s\theta|\dx z|^2+\eta_2^2s^3\theta^3|z|^2.
	\end{equation}
	This completes the proof of lemma.
\end{proof}
\begin{Lem}\label{Carleman.lem7}
	Let $n\in\NN^{*}$. Then there exists a positive constant $s_2>0$ such that, for all
	\begin{equation}\label{eq90}
	s\ge s_2(T+T^2),
	\end{equation}
	the following inequality holds
	\begin{equation}\label{eq91}
	\int_{0}^{T}\int_{\operatorname{\omega}_{bdy}}\operatorname{K_{bdy}}dQ\ge \int_{0}^{T}\int_{\operatorname{\omega}_{bdy}}2s\theta|\dx z|^2+2s\theta|z|^2+\frac{\eta_1^2}{2}s^3\theta^3|z|^2dQ,
	\end{equation}
	with $\eta_1$ as in \eqref{eq57} and $\operatorname{K_{bdy}}$ be given by \eqref{eq74}. 
\end{Lem}
\begin{proof}
	Let $n\in\NN^{*}$ and $\operatorname{K_{bdy}}$ be given by \eqref{eq74}. 
	Due to Lemma \ref{Carleman.lem2}, one has for all $x\in\operatorname{\omega}_{bdy}$,
	$$\left|-3s\theta/\sin^4x\right|\le3C_5s2^{-4}T^4\theta^3,\qquad\left|2s^2\theta\theta'\left(\cos x/\sin x+A_1\operatorname{sign}(x)\right)^2\right|\le2C_6s^2T\theta^3,$$
	and
	$$\left|-(s\theta''/2)(\log|\sin x|+A_1|x|+A_2)\right|\le 5\times 2^{-2}C_7sT^2\theta^3.$$
	Here, $C_5=C_5(b'):= 1/\sin^4b$, $C_6=C_6(b'):=\left(\cos b'/\sin b'+A_1\operatorname{sign}(b')\right)^2$ and $C_7:= A_1\left(\frac{\pi}2+1\right).$
	So, if $s\ge s_2(T+T^2)$,
	with $s_2=s_2(b'):=\max\left(\frac{\sqrt{3C_5}}{2\eta_1},\frac{\sqrt{5C_7}}{\eta_1},\frac{8C_6}{\eta_1^2}\right)$,
	then
	\begin{equation}\label{eq92}
\operatorname{K_{bdy}}\ge 2s\theta|\dx z|^2+2s\theta|z|^2+\frac{\eta_1^2}{2}s^3\theta^3|z|^2+\frac{s\theta(2n^2-1/2)}{\cos^2x}(1+A_1\operatorname{sign}(x)\tan x)|z|^2.
	\end{equation}
	Observe now that, since $0\le\operatorname{sign}(x)\sin x\le 1$ for all $x\in\operatorname{\omega}_{bdy}:=(-\pi/2,-b')\cup(b',\pi/2)$, we obtain
	\begin{eqnarray}
	0<\int_{0}^{T}\int_{\operatorname{\omega}_{bdy}}\hspace{-0.4cm}\frac{(1+A_1\operatorname{sign}(x)\tan x)}{\cos^2x}|z(t,x)|^2dQ\hspace{-0.2cm} &\le&\hspace{-0.2cm} \int_{0}^{T}\int_{-\frac{\pi}{2}}^{\frac{\pi}{2}}\frac{|z(t,x)|^2}{\cos^2x}dQ+A_1\int_{0}^{T}\int_{-\frac{\pi}{2}}^{\frac{\pi}{2}}\frac{|v(t,x)|^2}{\cos^2x}dQ\nonumber\\
	&<&\infty,
	\end{eqnarray}
	by Hardy-Poincaré inequalities \eqref{eq::Poncaré-Hardy} (see, Remarks~\ref{rmq::useful} and \ref{rmq::useful1}). Remember that $z(t,\cdot)\in \operatorname{D(M_{n})}$ implies $z(t,\cdot) = \sqrt{\cos x}v(t,\cdot)$ for some $v(t,\cdot)\in \operatorname{D}(\cL_{n})$. So, the above two inequalities lead to
	$$
		\int_{0}^{T}\int_{\operatorname{\omega}_{bdy}}\operatorname{K_{bdy}}dQ\ge \int_{0}^{T}\int_{\operatorname{\omega}_{bdy}}2s\theta|\dx z|^2+2s\theta|z|^2+\frac{\eta_1^2}{2}s^3\theta^3|z|^2dQ,
	$$
	completing the proof of lemma.
\end{proof}

The following lemma is a straightforward combination of Lemmas~\ref{Carleman.lem6} and \ref{Carleman.lem7}.
\begin{Lem}\label{Carleman.lem9}
	Let $n\in\NN$ and $\cR_{0}=\cR_{0}(a',b') := \max(s_1,s_2)$. Then for all
	\begin{equation}\label{eq98}
	s\ge \cR_{0}\max(T+T^2,T^2n),
	\end{equation}
	it holds
	\begin{eqnarray}\label{eq99}
	\int_{0}^{T}\int_{\operatorname{\omega}_{bdy}}\operatorname{K_{bdy}}dQ+\int_{0}^{T}\int_{\operatorname{\omega}_{deg}}\operatorname{K_{deg}}dQ&\ge&\int_{0}^{T}\int_{I\bs\operatorname{\omega}_{con}}(2s\theta|\dx z|^2+C_8s^3\theta^3|z|^2)dQ.
	\end{eqnarray}
	We let $C_8=C_8(a',b'):=\min(\eta_1^2,\eta_2^2)>0$ and $I\bs\operatorname{\omega}_{con}:=(-\pi/2,\pi/2)\bs\operatorname{\omega}_{con}=\operatorname{\omega}_{bdy}\cup\operatorname{\omega}_{deg}$. 
\end{Lem}
In the subcontrol region $\operatorname{\omega}_{con} = (-b',-a')\cup(a',b')$, we have the following
\begin{Lem}\label{Carleman.lem10}
	Let $n\in\NN^{*}$ and assuming \eqref{eq76} and \eqref{eq98}. Then there exist positive constants $C_9, C_{12}>0$ such that the following inequality holds
	\begin{equation}\label{eq100}
	|\operatorname{K_{con}}|\le C_9s\theta|\dx z|^2+C_{12}s^3\theta^3|z|^2.
	\end{equation}
\end{Lem}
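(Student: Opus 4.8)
The plan is to exploit that the closed subcontrol region $\overline{\omega_{\mbox{\scriptsize{con}}}} = [-b',-a']\cup[a',b']$ is compact and stays away both from the degeneracy at $x=0$ and from the singularities at $x=\pm\frac\pi2$. Hence every $x$-dependent coefficient appearing in the right-hand side of (\ref{eq76}) is bounded there: the derivatives $\beta^{(4)}$, $\beta'$, $\beta''$ are continuous since $\beta\in\cC^4([-\frac\pi2,\frac\pi2])$, and $\sin x/\cos^3 x$ is continuous away from $\pm\frac\pi2$. I would fix a constant $M\ge 1$ bounding $|\beta^{(4)}|$, $|\beta'|$, $|\beta''|$ and $|\sin x/\cos^3x|$ on $\overline{\omega_{\mbox{\scriptsize{con}}}}$, so that $|\beta'|^2\le M^2$ as well, and then bound the four contributions in (\ref{eq76}) one by one.

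The gradient term is immediate: $|2s\theta\beta''|\,|\dx z|^2\le 2Ms\theta|\dx z|^2$, which already has the desired form and fixes $C_{11}=2M$. For the three contributions to the coefficient of $|z|^2$, the idea is to turn each temporal weight into $s^3\theta^3$ using Lemma \ref{Carleman.lem2} together with the lower bound (\ref{eq98}) on $s$. First, $\theta\le 2^{-4}T^4\theta^3$ and $s\ge\cR_0 T^2$ (so $T^4\le s^2/\cR_0^2$) combine into the key absorption inequality $s\theta\le \tfrac{1}{16\cR_0^2}s^3\theta^3$, which disposes of $\tfrac12 s\theta\beta^{(4)}$. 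For the mixed term $(s^2\theta\theta'-2s^3\theta^3\beta'')|\beta'|^2$, the second piece is directly bounded by $2M^2s^3\theta^3$, while for the first one $|\theta\theta'|\le T\theta^3$ and $T\le s/\cR_0$ give $s^2\theta\theta'|\beta'|^2\le M^2 s^2T\theta^3\le \tfrac{M^2}{\cR_0}s^3\theta^3$.

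The only genuinely $n$-dependent term is $s\theta\beta'(2n^2-\tfrac12)\tfrac{\sin x}{\cos^3x}$, and this is exactly where the hypothesis (\ref{eq98}) is essential. When $n=0$ it vanishes; when $n\ge1$, from $s\ge\cR_0 T^2 n$ I would deduce $n\le s/(\cR_0 T^2)$, hence $n^2\le s^2/(\cR_0^2 T^4)$, and combining with $\theta\le 2^{-4}T^4\theta^3$ yields $s\theta n^2\le\tfrac{1}{16\cR_0^2}s^3\theta^3$. Thus the quadratic growth in $n$ is precisely compensated by the extra powers of $s$, and the whole term is bounded by a constant multiple of $s^3\theta^3|z|^2$ (the harmless $-\tfrac12$ part being absorbed by the same $s\theta\le Cs^3\theta^3$ estimate). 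Summing the three $|z|^2$ estimates produces $C_{14}s^3\theta^3|z|^2$ and, with the gradient bound, gives (\ref{eq100}). The main — and really the only — obstacle is the control of the $n^2$ factor; but since (\ref{eq98}) pins $n$ below a multiple of $s/T^2$, no genuine difficulty arises, and everything else is just boundedness of continuous functions on a compact set.
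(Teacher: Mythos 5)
Your proof is correct and follows essentially the same route as the paper: bound every $x$-dependent coefficient on the compact set $\overline{\omega_{\mbox{\scriptsize{con}}}}$ (away from both $0$ and $\pm\pi/2$), then convert $s\theta$, $s^{2}\theta\theta'$ and the $n^{2}s\theta$ term into multiples of $s^{3}\theta^{3}$ via Lemma~\ref{Carleman.lem2} together with the lower bound (\ref{eq98}) on $s$, which is exactly how the paper absorbs the quadratic growth in $n$. The only blemishes are cosmetic and harmless for an existence-of-constants statement: the bound for the piece $2s^{3}\theta^{3}\beta''|\beta'|^{2}$ should read $2M^{3}$ rather than $2M^{2}$, and for $n=0$ the $n$-dependent term does not vanish but reduces to its $-\tfrac12$ part, which you do absorb in the parenthetical.
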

\begin{proof}
	Let $n\in\NN^{*}$, then by Lemma \ref{Carleman.lem2}, \eqref{eq76} and \eqref{eq98} we have
	\begin{eqnarray*}
		|\operatorname{K_{con}}|&\le&\left|\frac{s\theta\beta^{(4)}}{2}+s\theta\beta'\left(2n^2-\frac{1}{2}\right)\frac{\sin x}{\cos^3x}-\frac {s\theta''\beta}2+2s^2\theta\beta'^2(\theta'-s\theta^2\beta'')\right||z|^2+|2s\theta\beta''||\dx z|^2\nonumber\\
		&\le&C_9s\theta|\dx z|^2+C_{11}s^3\theta^3|z|^2+\frac{C_{10}T^4}{8}n^2s\theta^3|z|^2,
	\end{eqnarray*}
	where $$C_9=C_9(\beta):=2\max\{|\beta''(x)|: x\in[-b',-a']\cup[a',b']\},$$
	$$C_{10} =C_{10}(\beta) := \max\left\{\left|\beta^{(4)}(x)\right|+\left|\frac{\sin x}{\cos^3x}\beta'(x)\right|: x\in[-b',-a']\cup[a',b']\right\},$$
	and
	$$C_{11}=C_{11}(\beta):=\frac{5\cR_{0}^{-2}\|\beta\|_\infty}{4}+2^{-5}\cR_{0}^{-2}C_{10}+(\cR_{0}^{-1}+C_9)\max\{|\beta'(x)|^2:x\in[-b',-a']\cup[a',b']\}.$$
	Finally, due to \eqref{eq98}, we complete the proof with  $C_{12}=C_{12}(a',b',\beta):=C_{10}+C_{11}/8\cR_{0}^2$.
\end{proof}
Thanks to \eqref{eq61}, \eqref{eq72}, Lemmas \ref{Carleman.lem9} and \ref{Carleman.lem10}, we immediately obtain the following.
\begin{Lem}\label{Carleman.lem11}
	Let $n\in\NN^{*}$. Then for all $s\ge \cR_{0}\max(T+T^2,T^2n)$, one has
	\begin{multline}\label{eq101}
	\int_{0}^{T}\int_{I\bs\operatorname{\omega}_{con}}(2s\theta|\dx z|^2+C_8s^3\theta^3|z|^2)dQ\le\\
	\int_{0}^{T}\int_{\operatorname{\omega}_{con}}(C_9s\theta|\dx z|^2+C_{12}s^3\theta^3|z|^2)dQ
	+\frac{1}{2}\int_{Q}\left|e^{-\varphi}\cP_ng\right|^{2}dQ.
	\end{multline}
\end{Lem}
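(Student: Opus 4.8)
The plan is purely to assemble the four ingredients that have just been established, since all of the genuine analytic work has already been discharged in the preceding lemmas. First I would start from the exact identity \eqref{eq72}, which splits the cross term $\int_{Q}\cP_n^{+}z\,\cP_n^{-}z\,dQ$ into its three regional contributions over $\omega_{\mbox{\scriptsize{bdy}}}$, $\omega_{\mbox{\scriptsize{deg}}}$ and $\omega_{\mbox{\scriptsize{con}}}$. The idea is to keep the first two contributions together, where good \emph{lower} bounds are available, and to move the $\omega_{\mbox{\scriptsize{con}}}$ integral to the right-hand side where only an \emph{upper} bound on its size is needed.

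Next I would invoke Lemma \ref{Carleman.lem9}: for every $s\ge\cR_{0}\max(T+T^2,T^2n)$, the sum of the boundary and degenerate integrals is bounded below by $\int_{0}^{T}\int_{I\bs\omega_{\mbox{\scriptsize{con}}}}(2s\theta|\dx z|^2+C_{10}s^3\theta^3|z|^2)\,dQ$, which is exactly the quantity on the left of \eqref{eq101}. For the remaining piece over $\omega_{\mbox{\scriptsize{con}}}$, the pointwise bound \eqref{eq100} of Lemma \ref{Carleman.lem10} gives $\int_{0}^{T}\int_{\omega_{\mbox{\scriptsize{con}}}}K_{\mbox{\scriptsize{con}}}^n\,dQ\ge-\int_{0}^{T}\int_{\omega_{\mbox{\scriptsize{con}}}}(C_{11}s\theta|\dx z|^2+C_{14}s^3\theta^3|z|^2)\,dQ$, so transferring this contribution across the inequality produces precisely the subcontrol term on the right of \eqref{eq101}.

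Finally, I would close the estimate from above using \eqref{eq61}, namely $\int_{Q}\cP_n^{+}z\,\cP_n^{-}z\,dQ\le\frac{1}{2}\int_{Q}|e^{-\varphi}\cP_ng|^2\,dQ$. Chaining the lower bounds furnished by Lemmas \ref{Carleman.lem9} and \ref{Carleman.lem10}, through the identity \eqref{eq72}, against this upper bound yields \eqref{eq101} at once, under the same threshold $s\ge\cR_{0}\max(T+T^2,T^2n)$ inherited from Lemma \ref{Carleman.lem9}. There is essentially no obstacle at this step: the only point requiring care is the bookkeeping of signs, since $K_{\mbox{\scriptsize{con}}}^n$ is not sign-definite and must therefore be controlled by its absolute value, while the factor $\frac{1}{2}$ on the source term must be carried through unchanged. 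All the real difficulty — the design of the weight $\beta$, the absorption of the singular terms near $\pm\frac\pi 2$ via the Poincar\'e--Hardy inequality of Lemma \ref{Fourier.lem3}, and the determination of the admissible range of $s$ — has already been resolved in Lemmas \ref{Carleman.lem1}, \ref{Carleman.lem6}, \ref{Carleman.lem7} and \ref{Carleman.lem8}, so that the present lemma is indeed an immediate corollary.
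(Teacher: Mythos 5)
Your argument is correct and coincides with the paper's own proof: the paper obtains Lemma \ref{Carleman.lem11} "immediately" from \eqref{eq61}, \eqref{eq72}, and Lemmas \ref{Carleman.lem9} and \ref{Carleman.lem10}, which is exactly the chain you assemble. The sign bookkeeping for $K_{\mbox{\scriptsize{con}}}^n$ and the factor $\tfrac12$ are handled correctly.
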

In the following lemma, we come back to $g$.
\begin{Lem}\label{Carleman.lem12}
	Let $n\in\NN^{*}$ and assume \eqref{eq101}. Then there exist positive constants $C_{13}$, $C_{16}$ and $C_{17}$ such that  for all $s\ge \cR_{0}\max(T+T^2,T^2n)$, it holds
	\begin{multline}\label{eq102}
	\int_{Q}(C_{13}s\theta|\dx g|^2+(C_8/2)s^3\theta^3|g|^2)e^{-2\varphi}dQ\le\\
	\int_{0}^{T}\int_{\operatorname{\omega}_{con}}(C_{16}s\theta|\dx g|^2+C_{17}s^3\theta^3|g|^2)e^{-2\varphi}dQ
	+\frac{1}{2}\int_{Q}\left|e^{-\varphi}\cP_ng\right|^{2}dQ.
	\end{multline}
\end{Lem}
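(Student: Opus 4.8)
The plan is to pass from the conjugated variable $z$ back to the original function $g$ in the estimate (\ref{eq101}) of Lemma~\ref{Carleman.lem11}, using the change of variables (\ref{eq53}), i.e.\ $z = g\,e^{-\varphi}$ with $\varphi = s\theta\beta$. Since $|z|^2 = |g|^2 e^{-2\varphi}$, the zeroth-order terms transform directly, and all the work concerns the gradient terms. Differentiating (\ref{eq53}) gives the identity $e^{-\varphi}\dx g = \dx z + s\theta\beta' z$, which will be the only algebraic input; note that $\dx\varphi = s\theta\beta'$ and that, because $\beta\in\cC^{4}([-\frac\pi 2,\frac\pi 2])$, the quantity $M:=\max_{[-\pi/2,\pi/2]}|\beta'|$ is finite.

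First, to bound the left-hand side of (\ref{eq101}) from below, I would apply Young's inequality with a free parameter $\varepsilon>0$,
$$e^{-2\varphi}|\dx g|^2 = |\dx z + s\theta\beta' z|^2 \le (1+\varepsilon)|\dx z|^2 + (1+\varepsilon^{-1})s^2\theta^2|\beta'|^2|z|^2,$$
which yields
$$2s\theta|\dx z|^2 \ge \tfrac{2}{1+\varepsilon}\,s\theta\, e^{-2\varphi}|\dx g|^2 - 2\tfrac{1+\varepsilon^{-1}}{1+\varepsilon}M^2\, s^3\theta^3|z|^2.$$
The bad coefficient $2\tfrac{1+\varepsilon^{-1}}{1+\varepsilon}M^2$ tends to $0$ as $\varepsilon\to\infty$, so I would fix $\varepsilon$ large enough that it does not exceed $C_{10}/2$. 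The resulting negative contribution is then absorbed by half of the term $C_{10}s^3\theta^3|z|^2$ present on the left-hand side of (\ref{eq101}), and, setting $C_{15}:=\tfrac{2}{1+\varepsilon}>0$, the left-hand side is bounded below by $\int_{0}^{T}\int_{I\bs\omega_{\mbox{\scriptsize{con}}}}(C_{15}s\theta|\dx g|^2 + \tfrac{C_{10}}{2}s^3\theta^3|g|^2)e^{-2\varphi}dQ$.

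Next, to bound the $\omega_{\mbox{\scriptsize{con}}}$-integral on the right-hand side of (\ref{eq101}) from above, I would use the cruder estimate $|\dx z|^2 \le 2e^{-2\varphi}|\dx g|^2 + 2M^2 s^2\theta^2|z|^2$, so that $C_{11}s\theta|\dx z|^2 \le 2C_{11}s\theta\, e^{-2\varphi}|\dx g|^2 + 2C_{11}M^2 s^3\theta^3|z|^2$, the last term merging with $C_{14}s^3\theta^3|z|^2$. This produces constants $C_{16}:=2C_{11}$ and $C_{17}:=C_{14}+2C_{11}M^2$ for the $g$-terms over $\omega_{\mbox{\scriptsize{con}}}$. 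Finally, to replace the truncated domain $I\bs\omega_{\mbox{\scriptsize{con}}}$ by the full interval, I would add $\int_{0}^{T}\int_{\omega_{\mbox{\scriptsize{con}}}}(C_{15}s\theta|\dx g|^2 + \tfrac{C_{10}}{2}s^3\theta^3|g|^2)e^{-2\varphi}dQ$ to both sides: the left-hand side then integrates over all of $Q$, while on the right the added terms combine with those of the previous step, giving (\ref{eq102}) with $C_{18}:=C_{16}+C_{15}$ and $C_{19}:=C_{17}+C_{10}/2$.

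The only delicate step is the lower bound. Because $|\dx\varphi|^2 = s^2\theta^2|\beta'|^2$ makes the cross term exactly of order $s^3\theta^3$ — the same order as the good zeroth-order term $C_{10}s^3\theta^3|z|^2$ — a triangle inequality with the fixed constant $2$ would fail to leave anything positive when $M^2$ is large compared to $C_{10}$. The free parameter $\varepsilon$ in Young's inequality is precisely what allows the cross term to be made arbitrarily small (hence absorbable by half of $C_{10}s^3\theta^3|z|^2$) while still retaining a strictly positive gradient coefficient $C_{15}$; the boundedness of $\beta'$ on the closed interval, guaranteed by Lemma~\ref{Carleman.lem1}, is what makes $M$ finite and this trade-off possible.
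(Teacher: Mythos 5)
Your proof is correct and follows essentially the same route as the paper: both pass from $z=ge^{-\varphi}$ back to $g$ via a Young inequality with a free parameter chosen so that the cross term of order $s^3\theta^3|z|^2$ is absorbed by half of $C_{10}s^3\theta^3|z|^2$ (your large $\varepsilon$ is just the reciprocal of the paper's small $\varepsilon=C_{10}/(4\|\beta'\|_\infty^2)$), then bound the $\omega_{\mbox{\scriptsize{con}}}$ gradient term crudely and add the missing $\omega_{\mbox{\scriptsize{con}}}$ integral to both sides. The resulting constants $C_{15},C_{18},C_{19}$ match the paper's up to this reparametrization.
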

\begin{proof}
	By (\ref{eq53}), one has $\dx z=(\dx g-g\dx\varphi)e^{-\varphi}$. So, we obtain for all $\varepsilon>0$,
	$$-2g\dx g\dx\varphi \ge-|\partial_{x}g|^2/(1+\varepsilon)-(1+\varepsilon)|\partial_{x}\varphi|^2|g|^2,$$
	so that
	\begin{eqnarray*}
		2s\theta|\partial_{x}g-\partial_{x}\varphi g|^{2}+C_8s^3\theta^3|g|^2\ge 2s\theta\varepsilon|\partial_{x}g|^2/(1+\varepsilon)+s^3\theta^3(C_8-2\varepsilon|\beta'(x)|^2)|g|^2. 
	\end{eqnarray*}
	Letting now $\varepsilon=\varepsilon(\beta):=2^{-2}\|\beta'\|_{\infty}^{-2}C_8$,
	we have for all $s\ge\cC_{0}\max(T+T^2,T^2n),$
	\begin{multline}\label{eq103}
	\int_{0}^{T}\int_{I\bs\operatorname{\omega}_{con}}\hspace{-0.5cm}(C_{13}s\theta|\dx g|^2+(C_8/2)s^3\theta^3|g|^2)e^{-2\varphi}dQ\le\\
	\int_{0}^{T}\int_{\operatorname{\omega}_{con}}\hspace{-0.5cm}(C_{14}s\theta|\dx g|^2+C_{15}s^3\theta^3|g|^2)e^{-2\varphi}dQ
	+\frac{1}{2}\int_{Q}\left|e^{-\varphi}\cP_ng\right|^{2}dQ,
	\end{multline}
	where $C_{13}=C_{13}(\beta):=2\varepsilon/(1+\varepsilon)$, $C_{14}=C_{14}(\beta):=2C_9$, and $C_{15}=C_{15}(\beta) := C_{12}+2C_9\max\{|\beta'(x)|^2: x\in[-b',-a']\cup[a',b']\}$. By adding the same quantity $$\int_{0}^{T}\int_{\operatorname{\omega}_{con}}(C_{13}s\theta|\dx g|^2+(C_8/2)s^3\theta^3|g|^2)e^{-2\varphi}dQ$$ to the both sides of \eqref{eq103}, we complete the proof of lemma, with $C_{16}=C_{16}(\beta):=C_{13}+C_{14}$ and $C_{17}=C_{17}(\beta):=C_8/2+C_{15}$. 
\end{proof}
Let us prove that terms similar to the second term of the right-hand side of \eqref{eq102} dominate the first one. We achieve this by the use of a smooth cut-off function.
\begin{Lem}\label{Carleman.lem13}
	Let $n\in\NN^{*}$ and assume \eqref{eq102}. Then there exists a positive constant $C_{19}>0$ such that for all $s\ge \cR_{0}\max(T+T^2,T^2n)$, one has
	\begin{eqnarray}\label{eq104}
	\int_{Q}(C_{13}s\theta|\dx g|^2+(C_8/2)s^3\theta^3|g|^2)e^{-2\varphi}dQ&\le&\int_{0}^{T}\int_{\omega_{a,b}}C_{19}s^3\theta^3|g|^2e^{-2\varphi}dQ+\int_{Q}\left|e^{-\varphi}\cP_ng\right|^{2}dQ.\nonumber\\
	\end{eqnarray}
\end{Lem}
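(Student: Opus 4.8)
The plan is to absorb the first-order observation term $\int_0^T\int_{\omega_{\mbox{\scriptsize{con}}}}C_{18}s\theta|\dx g|^2e^{-2\varphi}\,dQ$ on the right-hand side of \eqref{eq102}, trading it against a zeroth-order term on the slightly larger set $\omega_{a,b}$ plus one more copy of the source term. First I would fix a cut-off function $\xi\in\cC^{\infty}(\RR)$ with $0\le\xi\le 1$, $\xi\equiv 1$ on $\overline{\omega_{\mbox{\scriptsize{con}}}}$ and with compact support inside $\omega_{a,b}$. By \eqref{eq49} one has $[a',b']\subset(a,b)$ with $b\le\frac\pi2$, so $\operatorname{supp}\xi$ can be taken inside $[-b'',-a'']\cup[a'',b'']$ for some $a<a''<a'$, $b'<b''<b$; in particular $b''<\frac\pi2$, which keeps $\operatorname{supp}\xi$ away from the poles $\pm\frac\pi2$ where $q_n$ blows up. Bounding $\mathbbm 1_{\omega_{\mbox{\scriptsize{con}}}}\le\xi$ and setting $\Phi:=\xi s\theta e^{-2\varphi}$, the task reduces to estimating $\int_0^T\int_{\omega_{a,b}}\Phi|\dx g|^2\,dx\,dt$.

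The key computation is two integrations by parts in $x$, whose boundary terms vanish since $\operatorname{supp}\xi$ is compact in $(-\frac\pi2,\frac\pi2)$:
\[
\int_{\omega_{a,b}}\Phi|\dx g|^2\,dx
=-\int_{\omega_{a,b}}\Phi\,(\dx^2 g)\,g\,dx+\tfrac12\int_{\omega_{a,b}}(\dx^2\Phi)\,|g|^2\,dx .
\]
I then replace $\dx^2 g$ using the equation, namely $\dx^2 g=\dt g+q_n g-\cP_n g$. Integrating the resulting term $-\int\Phi\,\dt g\,g$ by parts in time yields $\tfrac12\int\dt\Phi\,|g|^2$, the contributions at $t=0,T$ vanishing because $\theta(t)\to 0$ there (Lemma~\ref{Carleman.lem2}). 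At this stage every term except the one carrying $\cP_n g$ is of the form (coefficient)$\times|g|^2e^{-2\varphi}$, the coefficients being $\dx^2\Phi$, $\dt\Phi$ and $\Phi q_n$.

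The heart of the matter is to verify that each of these coefficients is dominated by $Cs^3\theta^3e^{-2\varphi}$ on $\operatorname{supp}\xi$. For $\dx^2\Phi$ the leading contribution is $4\xi s^3\theta^3(\beta')^2e^{-2\varphi}$, already of the right order since $\beta'$ is bounded; the lower powers of $s$ (and $\dt\Phi$, whose leading term is controlled by $|\theta\theta'|\le T\theta^3$) are absorbed into $s^3\theta^3$ using Lemma~\ref{Carleman.lem2} together with $s\ge\cR_0(T+T^2)$, so that $T\le s/\cR_0$ and $s\theta\ge 4\cR_0$. The delicate term is $\Phi q_n=\xi s\theta q_n e^{-2\varphi}$: on $\operatorname{supp}\xi$ one has $|q_n(x)|\le C_\xi(1+n^2)$, and the $n$-uniform threshold $s\ge\cR_0 T^2 n$ (with $s\ge\cR_0 T^2$) gives $1+n^2\le 2s^2/(\cR_0^2 T^4)$, whence $\Phi q_n\lesssim \xi s^3(\theta/T^4)e^{-2\varphi}\lesssim\xi s^3\theta^3e^{-2\varphi}$ because $\theta\le 2^{-4}T^4\theta^3$. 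This is precisely where the $n$-dependent lower bound on $s$ enters, and it is the main (if routine) obstacle of the argument.

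Finally, for the remaining cross term I would apply Young's inequality with a free parameter $\mu>0$,
\[
C_{18}\,\Phi\,(\cP_n g)\,g
=C_{18}\,\xi s\theta\,(e^{-\varphi}\cP_n g)(e^{-\varphi}g)
\le\frac{C_{18}\mu}{2}\,|e^{-\varphi}\cP_n g|^2+\frac{C_{18}}{2\mu}\,(\xi s\theta)^2\,|e^{-\varphi}g|^2 ,
\]
and choose $\mu=1/C_{18}$, so that the first term contributes exactly $\tfrac12\int_Q|e^{-\varphi}\cP_n g|^2\,dQ$, while the second is again $\lesssim s^3\theta^3|g|^2e^{-2\varphi}$ since $s^2\theta^2\le s^3\theta^3/(4\cR_0)$. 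Collecting all contributions bounds the gradient term on $\omega_{\mbox{\scriptsize{con}}}$ by $\int_0^T\int_{\omega_{a,b}}\widetilde Cs^3\theta^3|g|^2e^{-2\varphi}\,dQ+\tfrac12\int_Q|e^{-\varphi}\cP_n g|^2\,dQ$. Substituting this into \eqref{eq102}, merging the existing $C_{19}$ term via $\omega_{\mbox{\scriptsize{con}}}\subset\omega_{a,b}$, and combining the two halves of the source term into a single $\int_Q|e^{-\varphi}\cP_n g|^2\,dQ$, yields \eqref{eq104} with $C_{21}:=\widetilde C+C_{19}$.
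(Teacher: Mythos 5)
Your proposal is correct and follows essentially the same route as the paper: localize with a cut-off supported in $\omega_{a,b}$, integrate by parts to trade the gradient term on $\omega_{\mbox{\scriptsize{con}}}$ against zeroth-order terms and the equation, and close with Young's inequality against $\frac12\int_Q|e^{-\varphi}\cP_ng|^2dQ$. The only (harmless) deviation is that you bound the full potential via $|q_n|\le C_\xi(1+n^2)$ and the threshold $s\ge\cR_0T^2n$, whereas the paper's proof discards the $n^2\tan^2x\,|g|^2$ contribution by its favorable sign and only estimates the bounded remainder $\tfrac14(\cos^{-2}x+1)$.
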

\begin{proof}
	Recall that $\operatorname{\omega}_{con}=(-b',-a')\cup(a',b')\subset\omega_{a,b}=(-b,-a)\cup(a,b)$ since $0<a<a'<b'<b\le\pi/2$. 
	Choosing a cut-off function $\rho\in\cC^{\infty}(\RR)$ such that $0\le\rho\le 1$ and
	\begin{equation}\label{eq105}
	\begin{array}{c c}
	\rho=1 &\hspace{0.3cm}\mbox{on }\operatorname{\omega}_{con}\\
	\rho=0 &\hspace{0.5cm}\mbox{on }I\bs\omega_{a,b},
	\end{array}
	\end{equation}
	we get
	\begin{eqnarray}\label{eq106}
	\int_{Q}(\cP_ng)g\rho\theta e^{-2\varphi}dQ
	&\ge&\int_{Q}\left[g\partial_{t}g-g\dx^2g-\frac{1}{2}|g|^2\right]\rho\theta e^{-2\varphi}dQ.
	\end{eqnarray}
	Or, one has
	$$\int_{Q}g\partial_{t}g\rho\theta e^{-2\varphi}dQ=\int_{Q}\frac{1}{2}|g|^2\rho(2\theta\partial_{t}\varphi+(T-2t)\theta^2)e^{-2\varphi}dQ,$$
	and
	$$\int_{Q}-g\dx^2g\rho\theta e^{-2\varphi}dQ=\int_{Q}\rho e^{-2\varphi}\theta|\dx g|^2dQ-\int_{Q}\frac{|g|^2}{2}e^{-2\varphi}\theta(\rho''-4\rho'\dx\varphi+\rho(4|\dx\varphi|^2-2\dx^2\varphi))dQ.$$
	Thus, from (\ref{eq106}), we obtain for all $s\ge\cR_{0}\max(T+T^2,T^2n),$
	\begin{eqnarray}\label{eq107}
	\int_{0}^{T}\int_{\operatorname{\omega}_{con}}C_{16}s\theta|\dx g|^2e^{-2\varphi}dQ&\le&\int_{Q}C_{16}s \theta\rho|\dx g|^2e^{-2\varphi}dQ\nonumber\\
	&\le&\frac{1}{2}\int_{Q}|e^{-\varphi}\cP_ng|^2dQ+\int_{0}^{T}\int_{\omega_{a,b}}C_{18}s^3\theta^3|g|^2e^{-2\varphi}dQ,
	\end{eqnarray}
	where the positive constant, $$
	C_{18}=C_{18}(\beta,\rho) :=\frac{C_{16}^2}{8\cR_{0}^2}+ C_{16}\left[4\|\beta'\|_{\infty}^2+\frac{\|\rho'\beta'\|_{\infty}+\|\beta''\|_{\infty}+4\|\beta\|_{\infty}}{2\cR_{0}}+\frac{9+\|\rho''\|_{\infty}}{32\cR_{0}^{2}}\right].$$ 
	So we deduce from \eqref{eq102} and \eqref{eq107} that, for all $s\ge \cR_{0}\max(T+T^2,T^2n)$, one has
	$$	\int_{Q}(C_{13}s\theta|\dx g|^2+(C_8/2)s^3\theta^3|g|^2)e^{-2\varphi}dQ\le\int_{0}^{T}\int_{\omega_{a,b}}C_{19}s^3\theta^3|g|^2e^{-2\varphi}dQ+\int_{Q}\left|e^{-\varphi}\cP_ng\right|^{2}dQ,$$
	and this completes the proof of lemma with $C_{19} = C_{19}(\beta,\rho):=C_{18}+C_{17}$.
\end{proof}
We can complete now the proof of Carleman estimate \eqref{eq48}.
\begin{proof}[Proof of Proposition \ref{Carleman.pro}]
	It suffices to consider Lemma \ref{Carleman.lem13}, and let 
	$$\cR_{1}=\cR_{1}(\beta,\rho):=\frac{\min(C_{13},C_8/2)}{\max(1,C_{19})}. \qedhere$$
\end{proof}
\section{Proof of Theorem \ref{Theo2}}\label{s.proof}
In this section we present the proof of Theorem \ref{Theo2}, along the following lines:
\begin{enumerate}
	\item The proof of the negative statement, presented in Section \ref{ss.proof_negative}, relies on the use of appropriate test function (which concentrate at zero when $n=l\rightarrow+\infty$) to falsify uniform observability inequality \eqref{eq46} when $T\le\log(1/\cos a)$;
	\item The proof of the positive statement, presented in Section \ref{ss.proof_positive}, relies on the uniform observability inequality \eqref{eq46} in large time, by using the global Carleman estimate for system \eqref{eq23}, proved in the previous section.
\end{enumerate}

\subsection{Proof of the negative statement of Theorem \ref{Theo2}}\label{ss.proof_negative}
The goal of this subsection is to prove
\begin{Prop}\label{Negative.pro}
	Let $a,b\in\RR$ be such that $0<a<b\le\pi/2$ and $T\le\log\left(1/\cos a\right)$. Then system \eqref{eq23} is not observable in $(a,b)$ in time $T$ uniformly with respect to $n\in\NN^{*}$.
\end{Prop}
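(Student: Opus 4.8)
The plan is to \emph{falsify} the uniform observability inequality \eqref{eq46} by exhibiting, for every $T\le\log(1/\cos a)$, a sequence of initial data whose associated solutions concentrate so sharply at the equator that the energy observed on $\omega_{a,b}$ becomes negligible relative to the final energy. The natural candidate is the (transformed) \emph{ground state} of $M_n$. Recall from Subsection~\ref{ss.Well2} that $\cL_n v_{n,\ell}=\lambda_{n,\ell}v_{n,\ell}$ with $\lambda_{n,\ell}=\ell(\ell+1)-n^2$ for $\ell\ge n$; the smallest admissible index $\ell=n$ gives the eigenvalue $\lambda_{n,n}=n$, and since $P_n^n(\sin x)$ is proportional to $\cos^n x$ on $(-\pi/2,\pi/2)$, the eigenfunction is $v_{n,n}(x)=C_n\cos^n x$. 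After the unitary change $U$ of Subsection~\ref{ss.Well3} this becomes $w_n:=Uv_{n,n}$, with $w_n(x)=C_n\cos^{n+1/2}x$; since $U$ is unitary and the $v_{n,\ell}$ are orthonormal in $\cH_n$, we have $\|w_n\|_{L^2(-\pi/2,\pi/2)}=1$. The mass of $w_n$ concentrates near $x=0$ as $n\to\infty$, which is exactly the mechanism making the observation on $\omega_{a,b}=(a,b)$, bounded away from the equator, exponentially small.

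First I would take the initial datum $\tilde g_{0,n}:=w_n$, so that the solution of \eqref{eq39} is simply $\tilde g_n(t,x)=e^{-nt}w_n(x)$. Then the final energy is $\int_{-\pi/2}^{\pi/2}|\tilde g_n(T,x)|^2\,dx=e^{-2nT}$, while the observed energy factorises as
\begin{equation*}
\int_0^T\!\!\int_{\omega_{a,b}}|\tilde g_n(t,x)|^2\,dx\,dt=\Big(\int_0^T e^{-2nt}\,dt\Big)\int_{\omega_{a,b}}|w_n(x)|^2\,dx=\frac{1-e^{-2nT}}{2n}\,\frac{N_n}{D_n},
\end{equation*}
where $N_n:=\int_a^b\cos^{2n+1}x\,dx$ and $D_n:=\int_{-\pi/2}^{\pi/2}\cos^{2n+1}x\,dx$, using $\int_{(a,b)}|w_n|^2=N_n/D_n$ after the normalisation $C_n^2=1/D_n$ (for the two-sided region the even symmetry of $w_n$ only introduces a harmless factor $2$). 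The whole question thus reduces to the asymptotics of the single ratio
\begin{equation*}
\frac{\displaystyle\int_0^T\!\!\int_{\omega_{a,b}}|\tilde g_n|^2}{\displaystyle\int_{-\pi/2}^{\pi/2}|\tilde g_n(T)|^2}=\frac{e^{2nT}-1}{2n}\,\frac{N_n}{D_n},
\end{equation*}
which I must show tends to $0$: since the uniform observability of Definition~\ref{fourier.def2} requires a single $n$-independent constant $C$ in \eqref{eq46}, the vanishing of this ratio forces $C\to\infty$ and hence non-observability.

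To control the ratio I would use two elementary bounds. For the numerator, monotonicity of $\cos$ on $[a,b]\subset[0,\pi/2]$ gives $N_n\le (b-a)\cos^{2n+1}a$. For the denominator I would establish a lower bound of the form $D_n\ge c/\sqrt n$ for some $c>0$ and all large $n$, for instance by restricting the integral to $[0,1/\sqrt n]$, where $\cos^{2n+1}x\ge\cos^{2n+1}(1/\sqrt n)\to e^{-1}$ (this is the Wallis--Laplace asymptotic $D_n\sim\sqrt{\pi/n}$). Combining these and bounding $\tfrac{e^{2nT}-1}{2n}\le\tfrac{e^{2nT}}{2n}$, the ratio is dominated by a constant times $n^{-1/2}\,(e^{2T}\cos^2 a)^{n}$. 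The hypothesis $T\le\log(1/\cos a)$ is precisely $e^{2T}\cos^2a\le 1$, so $(e^{2T}\cos^2a)^n\le 1$ and the ratio is $\le C\,n^{-1/2}\to 0$, which contradicts \eqref{eq46} with an $n$-independent constant and proves the proposition. The main obstacle is the sharp matching of the exponential rate: one must extract the precise decay $\cos^{2n}a$ of the observation term (not merely some exponential decay), since it is this constant that pins the threshold to $\log(1/\cos a)$; the lower bound on $D_n$ serves only to absorb the surviving power of $n$ and is not critical to the rate.
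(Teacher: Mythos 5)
Your proposal is correct and follows essentially the same route as the paper: both use the (transformed) highest-weight spherical harmonic $w_n\propto\cos^{n+1/2}x$, the explicit solution $\tilde g_n=e^{-nt}w_n$, and the same ratio asymptotics, with the only cosmetic difference that you lower-bound the normalising integral by a Laplace-type restriction to $[0,1/\sqrt n]$ while the paper uses Wallis' formula plus Stirling. No gaps.
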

\begin{Rema}
	Note that, the not null observability result provided here remains true in $(-b,-a)\cup(a,b)$ by symmetry and parity. However, this concerns only the cases $n\in\NN^{*}$ (see, Remark \ref{rmk::usefule}).
\end{Rema}
\begin{proof}
	We use a particular function which solves \eqref{eq23} and for which the observability inequality \eqref{eq46} fails under the condition $T\le\log\left(1/\cos a\right)$.
	More precisely, in what follows, we design a sequence of solutions of \eqref{eq23} such that
	\begin{equation}\label{eq108}
	\frac{\displaystyle\int_{0}^{T}\int_a^b|g_n(t,x)|^2\cos xdxdt}{\displaystyle\int_{-\frac\pi 2}^{\frac\pi 2}|g_n(T,x)|^2\cos xdx}\longrightarrow 0\hspace{0.5cm}\mbox{as}\hspace{0.5cm}n\longrightarrow+\infty.
	\end{equation}
	
	We recall that the highest weight spherical harmonics of degree $n$ present extreme concentration around the equator. These are defined by
	\begin{equation}\label{eq109}
	W_{n,n}(x,y) = \frac{(-1)^n}{2^nn!}\sqrt{\frac{(2n+1)!}{4\pi}}e^{iny}\cos^{n}x,
	\end{equation}
	where $n\in\NN^{*}$, $(x,y)\in[-\pi/2,\pi/2]\times[0,2\pi)$. Consider the function
	\begin{equation}\label{eq110}
	w_{n}(x) = \frac{(-1)^n}{2^nn!}\sqrt{(2n+1)!}(\cos x)^{n},\hspace{0.3cm}n\in\NN^{*},\;x\in[-\pi/2,\pi/2].
	\end{equation}
	By Wallis' formula, we have for every $n\in\NN^{*}$,
	$\int_{0}^{\frac\pi 2}\cos^{2n+1}xdx = 2^{2n}(n!)^2/(2n+1)!$.
	So for every $n\in\NN^{*}$, we deduce 
	$$\int_{-\frac\pi 2}^{\frac\pi 2}w_{n}^{2}(x)\cos xdx = \frac{(2n+1)!}{2^{2n}(n!)^{2}}\int_{0}^{\frac\pi 2}\cos^{2n+1}xdx = 1.$$
	
	We check easily now that for every $n\in\NN^{*}$, the function
	\begin{equation}\label{eq111}
	g_n(t,x) = e^{-nt}w_{n}(x),\hspace{0.5cm}t\in\RR,\;x\in[-\pi/2,\pi/2],
	\end{equation}
	solves the system \eqref{eq23} and
	$\int_{-\frac\pi 2}^{\frac\pi 2}|g_n(T,x)|^{2}\cos xdx = e^{-2nT}$.
	To get \eqref{eq108}, it suffices to prove that
	\begin{equation}\label{eq112}
	\frac{e^{2nT}}{2n}\int_a^bw_{n}(x)^{2}dx\rightarrow 0\hspace{0.3cm}\mbox{as}\hspace{0.3cm}n\rightarrow+\infty.
	\end{equation}
	Since $0<a<b\le \pi/2$, we have $\displaystyle\int_{a}^{b}(\cos x/\cos a)^{2n+1}dx\le(b-a)$. Then, we obtain
	\begin{eqnarray}\label{eq113}
	\frac{e^{2nT}}{2n}\int_a^bw_{n}(x)^{2}\cos xdx
	&\le&e^{2n(T+\ln\cos a)}\frac{(2n+1)!}{n2^{2n+1}(n!)^{2}}(b-a)\cos a.
	\end{eqnarray}
	By Stirling's formula, we have $n!\sim\sqrt{2\pi n}n^{n}e^{-n}$ as $n\rightarrow+\infty$. Thus, we deduce from \eqref{eq113} that $$\frac{e^{2nT}}{2n}\int_{a}^{b}w_{n}(x)^{2}\cos xdx\le(b-a)\frac{\cos a}{2\sqrt{\pi}}\frac{(2n+1)}{n^{3/2}}\longrightarrow 0\hspace{0.3cm}\mbox{as}\hspace{0.3cm}n\longrightarrow+\infty,$$
	since we are assuming $T\le\log\left(1/\cos a\right)$. This completes the proof of the proposition. 
\end{proof}

\subsection{Proof of the positive statement of Theorem \ref{Theo2}}\label{ss.proof_positive}

This subsection is devoted to the prove of following proposition using the Carleman estimate \eqref{eq48} and dissipation rate \eqref{eq32},
\begin{Prop}\label{Positive.pro}
	Let $a,b\in\RR$ be such that $0<a<b\le\pi/2$. Then there exists a positive time $T^{*}>0$ such that, for every $T\ge T^{*}$, system \eqref{eq23} is observable in $\omega_{a,b}=(-b,-a)\cup(a,b)$ in time $T$ uniformly with respect to $n\in\NN$.
\end{Prop}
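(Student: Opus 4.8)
The plan is to apply the global Carleman estimate of Proposition~\ref{Carleman.pro} to the solution $\tilde g_n$ of (\ref{eq39}), and to convert it into the uniform observability (\ref{eq46}) by means of the dissipation rate (\ref{eq32}). Since $\tilde g_n$ lies in the class (\ref{eq40}) and solves $\partial_t\tilde g_n+M_n\tilde g_n=0$, one has $\cP_n\tilde g_n=0$; hence the last term in (\ref{eq48}) drops, and keeping only the zeroth-order term on the left gives the reduced estimate
\begin{equation*}
\cR_1\int_0^T\int_{-\frac\pi2}^{\frac\pi2}\frac{s^3}{(t(T-t))^3}|\tilde g_n|^2 e^{-\frac{2s\beta(x)}{t(T-t)}}\,dx\,dt\le\int_0^T\int_{\omega_{a,b}}\frac{s^3}{(t(T-t))^3}|\tilde g_n|^2 e^{-\frac{2s\beta(x)}{t(T-t)}}\,dx\,dt.
\end{equation*}
Writing $\theta(t)=1/(t(T-t))$ and $\beta_{\max}=\max_{[-\pi/2,\pi/2]}\beta$, I would fix $n$ and take $s=s(n):=\cR_0\max(T+T^2,T^2n)$, the smallest admissible value. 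On the right, using $\beta\ge 1$ from (\ref{eq54}) together with the universal bound $\sup_{u>0}u^3e^{-2u}=:C_0<\infty$ gives $s^3\theta^3e^{-2s\theta\beta}\le C_0$, so the right-hand side is at most $C_0\int_0^T\int_{\omega_{a,b}}|\tilde g_n|^2$.

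Next I would restrict the left-hand time integral to $[T/4,3T/4]$, where $\theta$ is trapped between $\theta_{\min}=4/T^2$ and $\theta_{\max}=16/(3T^2)$, so that there $s^3\theta^3e^{-2s\theta\beta}\ge s^3\theta_{\min}^3 e^{-2s\theta_{\max}\beta_{\max}}$. I would then invoke the dissipation rate: as $U$ is unitary, (\ref{eq32}) transfers verbatim to $\tilde g_n$ in $L^2(-\pi/2,\pi/2)$, yielding $\|\tilde g_n(t)\|_{L^2}^2\ge e^{2n(T-t)}\|\tilde g_n(T)\|_{L^2}^2$, whence for $t\in[T/4,3T/4]$,
\begin{equation*}
\int_{T/4}^{3T/4}\int_{-\frac\pi2}^{\frac\pi2}|\tilde g_n|^2\,dx\,dt\ge \frac{T}{2}\,e^{nT/2}\,\|\tilde g_n(T)\|_{L^2}^2.
\end{equation*}
Combining the three bounds produces (\ref{eq46}) with explicit constant
\begin{equation*}
C(n,T)=\frac{2C_0}{\cR_1\,T\,s^3\theta_{\min}^3}\,e^{\,2s\theta_{\max}\beta_{\max}-nT/2}.
\end{equation*}

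The main obstacle, and the crux of the argument, is to make $C(n,T)$ uniform in $n$. Since $s(n)\sim\cR_0 T^2 n$ for large $n$, the weight produces exponential growth $2s\theta_{\max}\beta_{\max}\sim\tfrac{32}{3}\cR_0\beta_{\max}\,n$, which is \emph{independent of $T$} because the factors $T^2$ cancel (the same cancellation makes $s^3\theta_{\min}^3\sim 64\cR_0^3 n^3$ independent of $T$). The dissipation contributes a competing decay $e^{-nT/2}$, and the two exponentials balance exactly when $T/2>\tfrac{32}{3}\cR_0\beta_{\max}$. I would therefore set $T^*:=\tfrac{64}{3}\cR_0\beta_{\max}$: for every $T>T^*$ the exponent $2s\theta_{\max}\beta_{\max}-nT/2$ is negative in the regime $s=\cR_0T^2n$, so $C(n,T)\to 0$ as $n\to\infty$ (the polynomial factor $n^3$ only helping), while for the finitely many small indices with $s=\cR_0(T+T^2)$ the constant is plainly finite. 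Hence $\sup_{n\in\NN}C(n,T)<\infty$, which is precisely the uniform observability (\ref{eq46}). I note that this scheme yields \emph{some} admissible $T^*$ but not the sharp value $\log(1/\cos a)$, in agreement with the statement of the proposition.
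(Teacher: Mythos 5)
Your proposal is correct and follows essentially the same route as the paper: apply the Carleman estimate to $\tilde g_n$ (so $\cP_n\tilde g_n=0$), take $s=\cR_0\max(T+T^2,T^2n)$, bound the right-hand side by $\sup_{u>0}u^3e^{-2u}$, restrict the left-hand side to a middle time interval where $\theta\asymp T^{-2}$, and invoke the dissipation rate \eqref{eq32} so that for $T$ large the decay $e^{-cnT}$ beats the $T$-independent growth $e^{Cn}$ coming from $e^{2s\theta\beta}$ in the regime $s=\cR_0T^2n$. The only differences are cosmetic (the interval $[T/4,3T/4]$ instead of $(T/3,2T/3)$ and the resulting explicit value of $T^*$).
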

\begin{proof}
	We obtain the uniform observability inequality \eqref{eq46} in large time from observability inequality \eqref{eq::inequality 0} and Carleman estimate \eqref{eq48}. Let $n\in\NN^{*}$ and $\tilde{g}_n = \operatorname{U}g_n\in \cC([0,T]; L^2(-\pi/2,\pi/2))\cap\cC^{2}((0,T);\operatorname{D(M_{n})})$ be the solution of system \eqref{eq39}, where $g_n$ is the Fourier component \eqref{eq24} and $\operatorname{U}$, the unitary transformation introduced in Section \ref{ss.Well3}. Then by the Carleman estimate \eqref{eq48}, one has
	\begin{eqnarray}\label{eq114}
	\cR_{1}	\int_{Q}\theta^3|\tilde{g}_n(t,x)|^2e^{-2\varphi}dQ&\le&\int_{0}^{T}\int_{\omega_{a,b}}\theta^3|\tilde{g}_n(t,x)|^2e^{-2\varphi}dQ,
	\end{eqnarray}
	for all $s\ge \cR_{0}\max(T+T^{2},T^{2}n)$, and for some constants $\cR_{0},\cR_{1}>0$ independent of $n$, $T$ and $\tilde{g}_n$. From now on we set
	$$s:= \cR_{0}\max(T+T^{2},T^{2}n).$$
	For $t\in(T/3,2T/3)$, we have owing to dissipation rate (\ref{eq39})
	$$\frac{4}{T^2}\le\theta(t)\le\frac{9}{2T^2}\hspace{0.5cm}\mbox{and}\hspace{0.5cm}\int_{-\frac\pi 2}^{\frac\pi 2}|\tilde{g}_n(T,x)|^2dx\le e^{-\frac{2}{3}nT}\int_{-\frac\pi 2}^{\frac\pi 2}|\tilde{g}_n(t,x)|^2dx.$$
	Integrating over $(T/3,2T/3)$, we have using \eqref{eq114}
	\begin{equation}\label{eq115}
	\frac{T}{3}\int_{-\frac\pi 2}^{\frac\pi 2}|\tilde{g}_n(T,x)|^2dx\le
	\frac{1}{\cR_{1}}\frac{T^6}{64}\frac{6}{8s^3\beta_{*}^3}e^{-\frac{2}{3}nT}e^{\frac{9}{T^2}s\beta^{*}}\int_{0}^{T}\int_{\omega_{a,b}}|\tilde{g}_n(t,x)|^2dxdt,
	\end{equation}
	where $\beta_{*}:=\min\{\beta(x):x\in[-\pi/2,\pi/2]\}$ and $\beta^{*}:=\max\{\beta(x):x\in[-\pi/2,\pi/2]\}$. Then, the following two cases may occur
	\begin{enumerate}
		\item[]\textbf{First case:} $n<1+1/T$. Then $s = \cR_{0}(T+T^2)$, and thus \eqref{eq115} yields 
		$$\int_{-\frac\pi 2}^{\frac\pi 2}|\tilde{g}_n(T,x)|^2dx
		\le\frac{1}{\cR_{1}}\frac{T^5}{64}\frac{18}{8\cR_{0}^3(T+T^2)^3\beta_{*}^3}e^{\frac{9}{T^2}\cR_{0}(T+T^2)\beta^{*}}\int_{0}^{T}\int_{\omega_{a,b}}|\tilde{g}_n(t,x)|^2dxdt;$$ 
		\item[]\textbf{Second case:} $n\ge1+1/T$. Then $s = \cR_{0}T^2n$, and thus \eqref{eq115} yields 
		$$\int_{-\frac\pi 2}^{\frac\pi 2}|\tilde{g}_n(T,x)|^2dx\le\frac{1}{\cR_{1}}\frac{1}{64}\frac{18}{8\cR_{0}^3T(1+1/T)^3\beta_{*}^3}e^{-\frac{2}{3}nT}e^{9n\cR_{0}\beta^{*}}\int_{0}^{T}\int_{\omega_{a,b}}|\tilde{g}_n(t,x)|^2dxdt.$$
	\end{enumerate}
It then suffices to observe that $\displaystyle-2nT/3+9n\cR_{0}\beta^{*}\le 0$ as soon as $\displaystyle T\ge T^{*}:=27\cR_{0}\beta_{*}/2$.

So, in both cases, there exists a positive constant $C_0'>0$ which is independent of $n\in\NN^{*}$, such that
$$
\int_{-\frac\pi 2}^{\frac\pi 2}|g_n(T,x)|^2\cos xdx\le C_0'\int_{0}^{T}\int_{\omega_{a,b}}|g_n(t,x)|^2\cos xdxdt,
$$
provided $T\ge T^{*}$. Then the above identity and \eqref{eq::inequality 0} assure existence of a positive constant $C:=\max(C_0,C_0')>0$ independent on $n\in\NN$ such that \eqref{eq46} holds true, provided $T\ge T^{*}$. This completes the proof of the proposition.

\end{proof}

%
%


\end{document}